\newcommand{\abs}[1]{\left\vert#1\right\vert}
\renewcommand{\tilde}{\widetilde}
\newcommand*\diff{\mathop{}\!\mathrm{d}}
\theoremstyle{plain}
\newtheorem{lemma}[theorem]{Lemma}
\newtheorem{proposition}[theorem]{Proposition}
\newtheorem{corollary}[theorem]{Corollary}
\theoremstyle{definition}
\newtheorem{example}[theorem]{Example}
\newtheorem{definition}[theorem]{Definition}
\newtheorem{remark}[theorem]{Remark}
\theoremstyle{plain}
\newtheorem*{theorem*}{Theorem}
\newtheorem*{lemma*}{Lemma}
\newtheorem*{proposition*}{Proposition}
\newtheorem*{corollary*}{Corollary}
\theoremstyle{definition}
\newtheorem*{beispiel*}{Beispiel}
\newtheorem*{definition*}{Definition}
\theoremstyle{remark}
\newtheorem*{bemerkung*}{Bemerkung}
\address[1]{%
$^{1}$ \quad Technische Universit\"at Dresden, Institut f\"ur Mathematische Stochastik, 01062 Dresden, Germany; \url{philipp.strietzel@tu-dresden.de}\\
$^{2}$ \quad Technische Universit\"at Dresden, Institut f\"ur Mathematische Stochastik, 01062 Dresden, Germany; \url{henriette.heinrich@tu-dresden.de}}
\abstract{
		We consider the optimal dividend problem in the so-called degenerate bivariate risk model under the assumption that the surplus of one branch may become negative. More specific, we solve the stochastic control problem of maximizing discounted dividends until simultaneous ruin of both branches of an insurance company by showing that the optimal value function satisfies a certain Hamilton-Jacobi-Bellman (HJB) equation. Further, we prove that the optimal value function is the smallest viscosity solution of said HJB equation, satisfying certain growth conditions. Under some additional assumptions, we show that the optimal strategy lies within a certain subclass of all admissible strategies and reduce the two-dimensional control problem to a one-dimensional one. The results are illustrated by a numerical example and Monte-Carlo simulated value functions. 
}
\begin{document}

\section{Introduction} \label{S1}

	The problem of paying dividends optimally naturally arises when considering insurance risk processes. This is due to the fact that for any classical \emph{Cramér-Lundberg} or even any spectrally negative \emph{Lévy} risk model, the process either drifts to infinity with positive probability, or it faces ruin almost surely. As the assumption that an insurance company's surplus grows to infinity is unrealistic, dividend payments are the natural choice to avoid this behavior. 
	
	In the univariate setting, optimal dividend payments are a well-studied field that was first introduced by De Finetti in \cite{DeFinetti1957} who considered the dividend barrier model. 
	Later, in \cite{Gerber1969}, it was shown that in the Cramér-Lundberg risk model, the optimal strategy is always a band strategy. The study of dividends in this classical risk model has been continued with several extensions, see e.g. the works \cite{Azcue2005,Azcue2010,Azcue2012} and \cite{Azcue2014}. The former three publications treat the cases of optimal dividend strategies with reinsurance, with investment in a Black-Scholes market, and the case of bounded dividend rates, respectively. The textbook \cite{Azcue2014} gives a broad overview of how to use the theory of stochastic control and the dynamic programming approach to tackle dividend problems. In \cite{Thonhauser2007} this approach is used to solve the problem of optimal dividend payments in the presence of a reward for later ruin. Also in the field of (spectrally negative) Lévy risk models, there are several works regarding the dividend problem. See e.g. \cite{Avram2007}, where the optimal dividend policy for a spectrally negative Lévy process with and without bailout loans is considered and the optimal strategy among all barrier strategies is identified, or \cite{Loeffen2008}, where these results are extended by giving sufficient conditions on when the optimal strategy is indeed of barrier type. For a more thorough overview of the available tools, approaches, optimality results and optimal strategies in the univariate setting we refer to \cite{Albrecher2009,Avanzi2009} and \cite{Schmidli2008}.  
	
	In multivariate risk theory, a popular model is the so-called \emph{degenerate bivariate risk model}: Given a Poisson process $N=\left(N(t)\right)_{t \ge 0}$ with rate $\lambda>0$, define a claim process by 
	
	\begin{equation} \label{eq_DefinitionClaimProcess}
		S(t) = \sum_{n=1}^{N(t)} U_n,	\qquad t\geq 0,
	\end{equation} 
	
	where $U_n$ are non-negative i.i.d. claim sizes with cdf $F$, independent of $N$. The degenerate model is then given via 
	
	\begin{equation} \label{eq_DefinitionModel}
		\mathbf{X}(t) = \begin{pmatrix} X_1(t) \\ X_2(t)\end{pmatrix} :=  \begin{pmatrix}
			x_1 + c_1 t - \sum_{n=1}^{N(t)}b_1 U_n \\
			x_2 + c_2 t - \sum_{n=1}^{N(t)}b_2 U_n
		\end{pmatrix} =: \mathbf{x} + \mathbf{c}t - \mathbf{b}S(t),
	\end{equation}
	
	where $x_1,x_2\geq 0$ are the initial capitals of the two branches of the insurer, $c_1,c_2>0$ define constant premium rates, and $b_1,b_2>0$ define the proportion of each claim covered by the corresponding branch. As the claims need to be fully covered, we assume $b_1+b_2 = 1$. \\
	The degenerate model can be seen as an insurer-reinsurer model with proportional reinsurance, where the insurer covers proportion $b_1$ of each claim, while the reinsurer covers proportion $b_2$. The model was first introduced in \cite{Avram2008}, where the authors derive the Laplace transform of the probability of ruin of at least one branch. Further on, the study of ruin probabilities in the degenerate model under different assumptions has been continued, see e.g. \cite{Avram2009,Avram2008,Badescu2011, Foss2017, Hu2013}. Also, in the field of optimal dividends, the degenerate model has gained attention, e.g. in \cite{Czarna2011} where dividends are paid according to an impulse or refraction control and ruin corresponds to exiting the positive quadrant. Under the same ruin assumption, in \cite{Azcue2018}, it was shown that the optimal value function for general admissible strategies satisfies a certain Hamilton-Jacobi-Bellmann equation and the optimal strategy is described. \\
	In the present work, we adapt the stochastic control approach used in \cite{Azcue2018} and consider the problem of paying dividends optimally under the assumption that one branch of the insurance company may have a negative surplus. I.e., we extend the set, where the insurance company is considered \emph{solvent} to $\mathbb{R}^2\backslash\mathbb{R}^2_{<0}$. We show that the optimal value function, which represents the expected discounted value of the paid dividends of the optimal strategy, may be characterized as the smallest viscosity solution of a certain Hamilton-Jacobi-Bellman (HJB) equation. Further, we show that in certain subcases, the optimal strategy lies within the subclass of so-called \emph{bang} strategies, which reduces the bivariate optimization problem to a univariate one. \\
	The paper is organized as follows. In Section \ref{Section_Preliminaries} we specify our model and introduce necessary notations. Afterward, in Section \ref{Section_OptimalValueFunction} we derive the HJB equation and show that it is satisfied by the optimal value function. Section \ref{Section_CandidateOptimalStrategy} is dedicated to the bang strategies, whereas in Section \ref{Section_ApproximationSimulation} we use Monte-Carlo simulation to approximate the optimal strategy for a certain example.
	
	\section{Preliminaries} \label{Section_Preliminaries}
	
	Throughout the whole paper we are going to use the bold version of any variable for the two-dimensional version of the same letter (e.g. $\mathbf{x} = (x_1,x_2)$) without explicitly stating it again. All random quantities are defined on a filtered probability space $(\Omega,\mathcal{A},\mathbb{F},\mathbb{P})$ and $\mathbb{P}_{\mathbf{x}},~   \mathbb{E}_{\mathbf{x}}$ denote the probability measure and expectation, given that $\mathbf{X}(0)=\mathbf{x}$, respectively. The derivative of a function $u$ with respect to the variable $x_i$ is denoted by $\partial_{x_i}u(\mathbf{x})$ or $u_{x_i}(\mathbf{x})$. For any set $M$ we denote by $M^\circ$ the inner of $M$. \\
	As mentioned before, in this paper we consider the degenerate bivariate risk model defined in \eqref{eq_DefinitionModel}.
	In contrast to the assumptions in \cite{Azcue2018}, we allow that one initial capital is negative as long as the other is not, i.e., we consider the solvency set $\mathcal{S}:=\mathbb{R}^2\backslash\mathbb{R}^2_{<0}$. 
    Without loss of generality, we assume that the second branch is equally or less profitable than the first one, i.e.
    
	\begin{equation}\label{eq_Assumption_c1b1c2b2}
		\frac{c_1}{b_1} \geq \frac{c_2}{b_2}.
	\end{equation}
	Both branches pay dividends to their shareholders using part of their surpluses, where the dividend payment strategy $\mathbf{L}(t)$ represents the total amount of dividends paid by both branches up to time $t$. We define the associated controlled process with initial surplus $\mathbf{x}=(x_1,x_2)$ as 
	
	\begin{equation}\label{Definition_Model}
		\mathbf{X}^{\mathbf{L}}(t)  := \mathbf{X}(t)- \mathbf{L}(t)
		= \begin{pmatrix}
			X_1^{\mathbf{L}}(t) \\
			X_2^{\mathbf{L}}(t)
		\end{pmatrix}
		= \begin{pmatrix}
			X_1(t)-L_1(t) \\
			X_2(t)-L_2(t)
		\end{pmatrix}.
	\end{equation}
	
	This is mainly the model as the one considered in \cite{Azcue2018}. The crucial difference to our work is, that in \cite{Azcue2018} the authors consider the ruin time  
	
	\begin{equation*}
		\tau_\vee^{\mathbf{L}}:=\inf \left\{ t>0: X_1(t)-L_1(t)<0 \text{ or } X_2(t)-L_2(t)<0 \right\},
	\end{equation*}
	i.e., the first moment when \emph{at least one} branch faces ruin, while we consider the time of \emph{simultaneous ruin}
	
	\begin{equation*}
		\tau^{\mathbf{L}}:=\inf \left\{ t>0: X_1(t)-L_1(t)<0 \text{ and } X_2(t)-L_2(t)<0 \right\},
	\end{equation*}
	i.e., the first moment when \emph{both} branches face ruin. Apart from the \emph{at-least-one} ruin, the simultaneous ruin is a common assumption for bivariate risk models, which is considered e.g. in \cite{Avram2009}, \cite{Foss2017} or \cite{Hu2013}. \\
	Corresponding to the new solvency set, we extend the concept of admissibility of a dividend strategy in comparison with \cite{Azcue2018}: 
	
	\begin{definition}[Admissible dividend payment strategy]\label{def:dividend_strategy}
		A bivariate dividend payment strategy $\mathbf{L}(t)=(L_1(t), L_2(t))_{t \geq 0}$ is called \textbf{admissible} if it is a componentwise non-decreasing, càglàd stochastic process that is predictable with respect to the natural filtration $\mathbb{F}$ of $\mathbf{X}$ and if it satisfies 
		
		\begin{enumerate}[label=(\roman*)]
			\item $L_i(0)=0$ for $i=1,2$,
			\item if $X_i(t) < L_i(t)$ then $\diff L_i(t)=0$ for all $t\geq0,i=1,2$,
			\item if  $X_i(t) \geq L_i(t)$ then $X_i(t)+\diff X_i(t) \geq \diff L_i(t)$ for all $t \geq 0,i=1,2$.
		\end{enumerate} 
	\end{definition}
	The previous definition ensures that dividends are only paid as long as the associated controlled branch is non-negative. Moreover, it guarantees that no branch faces ruin due to dividend payments. Note that we set $L_i(t) \equiv L_i(\tau^{\mathbf{L}})$ for $t \ge \tau^{\mathbf{L}}$. \\
	Further, for any admissible dividend strategy $\mathbf{L}$ and at any time point $0\leq t < \tau^{\mathbf{L}}$ it holds that 
	
		\begin{equation}\label{bounds}
			L_1(t)\leq X_1(t) \qquad \text{or} \qquad L_2(t)\leq X_2(t),
		\end{equation} 
    which is a direct consequence of the definition of our ruin time.
    
	The set of all admissible dividend strategies with initial surplus $\mathbf{x}=(x_1,x_2) \in \mathcal{S}$ is denoted by $\Pi_{\mathbf{x}}$. Given any admissible dividend strategy $\mathbf{L} \in \Pi_{\mathbf{x}}$, the associated controlled process $\mathbf{X}^{\mathbf{L}}$ is adapted to the natural filtration $\mathbb{F}$ of $\mathbf{X}$ and the ruin time $\tau^{\mathbf{L}}$ is a stopping time with respect to this filtration. Moreover, the two-dimensional controlled risk process is of finite variation and by construction, ruin can only happen at the arrival of claims.
	
	For a fixed dividend strategy $\mathbf{L} \in \Pi_{\mathbf{x}}$, $\mathbf{x} \in \mathcal{S}$, the \emph{value function} $V^{\mathbf{L}}(\mathbf{x})$, which represents the \emph{cumulative expected discounted dividends}, is defined as 
	
	\begin{equation} \label{def:cumulative_expected_dividends}
		V^{\mathbf{L}}(\mathbf{x}) := \mathbb{E}_{\mathbf{x}} \left[ \int_{0}^{\tau^{\mathbf{L}}} e^{-qs} \diff \mathbf{L}(s)\right] = \mathbb{E}_{\mathbf{x}} \left[ \int_{0}^{\tau^{\mathbf{L}}} e^{-qs} \diff L_1(s) + \int_{0}^{\tau^{\mathbf{L}}} e^{-qs} \diff L_2(s) \right] ,
	\end{equation}
	where $q>0$ is a constant discount factor. On $\mathbb{R}^2_{<0}$ we assume $V^{\mathbf{L}}$ to be zero. Our goal is to identify the \emph{optimal value function} and the corresponding strategy, i.e., to solve the optimization problem defined by 
	
	\begin{equation}\label{optimalvaluefct}
		V(\mathbf{x})=\sup_{\mathbf{L} \in \Pi_{\mathbf{x}}} V^{\mathbf{L}}(\mathbf{x}) 
	\end{equation} 
	for any $\mathbf{x} \in \mathcal{S}$.

	\section{The optimal value function} \label{Section_OptimalValueFunction}
	We start our investigation of the optimal value function by collecting some properties that are going to be used later on. We emphasize that most results and proofs in this section are in large parts similar to either the one-dimensional case presented in \cite{Azcue2014} or the degenerate model considered in \cite{Azcue2018}. Therefore, our argumentations focus on the differences. 
	
	\begin{lemma}\label{lemma:V_properties_lower_upper_bound}
		For all $(x_1,x_2) \in \mathcal{S}$ the optimal value function is well-defined. If both $x_1 ,x_2\geq 0$, then the optimal value function satisfies 
		
		\begin{equation} \label{eq_SimpleBound_1}
			x_1+x_2+\frac{c_1+c_2}{q+\lambda} \le V(x_1,x_2) \le x_1 + x_2 + \frac{c_1+c_2}{q}.
		\end{equation}
		If $x_1 \ge 0, x_2 < 0$ the optimal value function satisfies
		
		\begin{equation} \label{eq_SimpleBound_2}
			x_1 + \frac{c_1}{q+\lambda} + \frac{c_2}{q+ \lambda}e^{(q+\lambda) \frac{x_2}{c_2}} \le V(x_1,x_2) \le x_1 + \frac{c_1}{q} + \frac{c_2}{q} e^{q \frac{x_2}{c_2}}.
		\end{equation}
		Equation \eqref{eq_SimpleBound_2} analogously holds for the case $x_1<0$ and $x_2 \geq 0$ with indices exchanged. 
	\end{lemma}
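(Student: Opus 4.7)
The plan is to prove the upper bounds via pathwise estimates on any admissible $\mathbf{L}$ combined with integration by parts, to obtain the lower bounds by exhibiting one concrete admissible strategy in each case whose value is computable in closed form, and to derive well-definedness of $V$ on $\mathcal{S}$ as an immediate corollary of the finite upper bounds.

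For the upper bounds, my starting point is that conditions (ii) and (iii) in Definition \ref{def:dividend_strategy} together force, for any admissible $\mathbf{L}$ and any index $i$ with $x_i\geq 0$, the pathwise estimate $L_i(t)\leq X_i(t)\leq x_i+c_it$ while branch $i$ has non-negative surplus; since $L_i$ is afterwards frozen by (ii) and $L_i\geq 0$, this extends to $L_i(t)\leq x_i+c_it$ globally. When $x_2<0$, condition (ii) forces $L_2\equiv 0$ until $X_2$ first reaches $0$, so $L_2(t)\leq(x_2+c_2t)^{+}$ for every $t\geq 0$. Because $L_i$ is constant past $\tau^{\mathbf{L}}$, integration by parts yields
\begin{equation*}
\mathbb{E}_{\mathbf{x}}\!\left[\int_0^{\tau^{\mathbf{L}}} e^{-qs}\,\diff L_i(s)\right]=q\int_0^\infty e^{-qs}\,\mathbb{E}_{\mathbf{x}}[L_i(s)]\,\diff s,
\end{equation*}
into which the pathwise majorants can be plugged. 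Summing over $i=1,2$ delivers \eqref{eq_SimpleBound_1} directly; for \eqref{eq_SimpleBound_2} the only non-trivial piece is the elementary evaluation $q\int_{-x_2/c_2}^\infty e^{-qs}(x_2+c_2s)\,\diff s=\tfrac{c_2}{q}e^{qx_2/c_2}$.

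For the lower bounds, I would exhibit one explicit $\mathbf{L}\in\Pi_{\mathbf{x}}$ in each case. In the case $x_1,x_2\geq 0$, take the strategy that pays the lump sum $x_i$ from branch $i$ at $0^{+}$ and subsequently routes all premium income into dividends, i.e.\ $\diff L_i(t)=c_i\diff t$ for $t>0$; both controlled surpluses are then identically zero on $(0,T_1)$, both go negative simultaneously at the first claim $T_1\sim\mathrm{Exp}(\lambda)$, so $\tau^{\mathbf{L}}=T_1$, and a short calculation using $\mathbb{E}[(1-e^{-qT_1})/q]=1/(q+\lambda)$ recovers the lower bound in \eqref{eq_SimpleBound_1}. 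In the case $x_1\geq 0$, $x_2<0$, the same strategy is applied to branch $1$, while in branch $2$ all payments are postponed until the deterministic time $-x_2/c_2$, after which $\diff L_2(t)=c_2\diff t$ as long as $T_1$ has not yet occurred. Again $\tau^{\mathbf{L}}=T_1$, and splitting on the event $\{T_1>-x_2/c_2\}$ (which has probability $e^{\lambda x_2/c_2}$) gives precisely the expression in \eqref{eq_SimpleBound_2}. Well-definedness of $V(\mathbf{x})$ for every $\mathbf{x}\in\mathcal{S}$ follows at once from the finite upper bounds.

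The main obstacle I foresee is the admissibility bookkeeping for the candidate lower-bound strategies: one needs to verify that the lump-sum payment at $0^{+}$ is compatible with the càglàd framework (condition (iii) at the initial jump is tight but holds with equality), and that saturating $X_i^{\mathbf{L}}\equiv 0$ between claims by paying the full premium rate as dividend satisfies conditions (ii) and (iii) on the set where the controlled surplus vanishes. Symmetrically, for the upper bounds, the pathwise estimate $L_i(t)\leq X_i(t)$ up to the terminal time of branch $i$ requires a careful reading of the interplay between (ii) and (iii); once this bookkeeping is settled, the remainder of the argument is just integration by parts and elementary exponential computations.
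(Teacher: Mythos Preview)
Your proposal is correct and follows essentially the same approach as the paper: the lower bounds come from the ``pay everything immediately'' strategy whose ruin time is the first claim arrival $\tau_1\sim\operatorname{Exp}(\lambda)$, and the upper bounds come from the pathwise majorants $L_1(t)\le x_1+c_1t$ and $L_2(t)\le(x_2+c_2t)\mathds{1}_{\{t\ge -x_2/c_2\}}$. The only cosmetic difference is that you spell out the integration-by-parts identity $\int_0^\infty e^{-qs}\,\diff L_i(s)=q\int_0^\infty e^{-qs}L_i(s)\,\diff s$ explicitly, whereas the paper subsumes this under ``a similar computation''.
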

	
	\begin{proof}
		The proofs for well-definedness and inequality \eqref{eq_SimpleBound_1} are analog to the one-dimensional case presented in \cite[Prop. 1.2]{Azcue2014}. 
		To show \eqref{eq_SimpleBound_2} assume that $x_1 \geq 0$ and $x_2 <0$. Define the strategy $\mathbf{L}^0 = (L_1^0,L_2^0)$ as the strategy that pays at every $t\geq 0$ the maximum dividends possible. Obviously the ruin time $\tau^{\mathbf{L}_0}$ is equal to the arrival time of the first claim, denoted by $\tau_1 \sim \operatorname{Exp}(\lambda)$. In the first branch we obtain  $L_1(t) = x_1 + c_1 t$ for $t \leq \tau_1$. The second branch can only pay dividends if $X_2(t) \geq L_2(t)$. Hence, we need to wait at least until $t_0 = -\frac{x_2}{c_2}$, before any dividend can be paid. Consequently, the resulting dividend process is given by $L_2(t) = (x_2 + c_2 t) \mathds{1}_{\{ -\frac{x_2}{c_2} \leq t \leq \tau_1\}}$. We get 
		
		\begin{align*}
			V^{\mathbf{L}_0}(x_1, x_2) & = \mathbb{E}_{\mathbf{x}} \left[ \int_{0}^{\tau_1} e^{-qt} \diff L_1(t) + \int_{0}^{\tau_1} e^{-qt} \diff L_2(t)\right] \\
			& = x_1 + c_1 \mathbb{E}_{\mathbf{x}} \left[ \int_{0}^{\tau_1} e^{-qt} \diff t \right] + c_2 \mathbb{E}_{\mathbf{x}} \left[ \mathds{1}_{\{\tau_1 \ge -\frac{x_2}{c_2 }\}} \int_{-\frac{x_2}{c_2}}^{\tau_1} e^{-qt} \diff t \right] \\
			& = x_1 + \frac{c_1}{q+\lambda} + \frac{c_2}{q+\lambda} e^{(q+\lambda) \frac{x_2}{c_2}} ,
		\end{align*}
		which, by \eqref{optimalvaluefct}, implies the lower bound. The upper bound follows by a similar computation, since for any admissible dividend strategy $\mathbf{L}=(L_1,L_2)$ we have $L_1(t) \le (x_1 + c_1 t) \mathds{1}_{\{t \ge 0\}}$ and $L_2(t) \leq (x_2 + c_2 t) \mathds{1}_{\{t\ge -\frac{x_2}{c_2}\}}$.
	\end{proof}
	
	\begin{lemma}\label{Lemma_V_properties_increasing_locally_Lipschitz}
		The optimal value function V is componentwise increasing, locally Lipschitz, and satisfies 
		
		\begin{equation}\label{eq_increments1}
			0 < V(x_1 + h, x_2)-V(x_1,x_2) \le (e^{(q+\lambda)h/c_1}-1) V(x_1,x_2),
		\end{equation} 
		and 
		
		\begin{equation}\label{eq_increments2}
			0 < V(x_1, x_2+h)-V(x_1,x_2) \le (e^{(q+\lambda)h/c_2}-1) V(x_1,x_2)
		\end{equation}
		for any initial surplus $(x_1,x_2) \in \mathcal{S}$ and any $h>0$. \\
		In the case that $x_1 \ge 0$ and $x_2 \in \mathbb{R}$, we get
		
		\begin{equation}\label{eq_increments1lowerbound}
			h \le V(x_1 + h, x_2)-V(x_1,x_2),
		\end{equation}
		and if $x_1 \in \mathbb{R}$ and $x_2 \ge 0$, we get
		
		\begin{equation}\label{eq_increments2lowerbound}
			h \le V(x_1, x_2+h)-V(x_1,x_2),
		\end{equation}
		for any $h >0$. 
	\end{lemma}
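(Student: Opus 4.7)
The plan is to establish the four claims in the sequence: (a) componentwise monotonicity; (b) the exponential upper bounds \eqref{eq_increments1} and \eqref{eq_increments2}; (c) the linear lower bounds \eqref{eq_increments1lowerbound}, \eqref{eq_increments2lowerbound} together with the strict positivity stated in \eqref{eq_increments1}--\eqref{eq_increments2}; and (d) local Lipschitz continuity as a corollary of (b).

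For (a), I would observe that any admissible $\mathbf{L}\in\Pi_{(x_1,x_2)}$, viewed as a predictable process with respect to $\mathbb{F}$, remains admissible at $(x_1+h,x_2)$ for any $h>0$: the uncontrolled first branch is uniformly shifted upwards by $h$ while the second is unchanged, so conditions (ii) and (iii) of Definition \ref{def:dividend_strategy} are only relaxed, and the simultaneous ruin time can only be pushed later, since the event $\{X_1^{\mathbf{L}}+h<0\}\cap\{X_2^{\mathbf{L}}<0\}$ is strictly contained in $\{X_1^{\mathbf{L}}<0\}\cap\{X_2^{\mathbf{L}}<0\}$. Hence $V^{\mathbf{L}}(x_1+h,x_2)\ge V^{\mathbf{L}}(x_1,x_2)$, and taking suprema gives monotonicity in the first coordinate; the second coordinate is symmetric.

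For (b), I would use the standard dynamic-programming device: from $(x_1,x_2)$, consider the suboptimal strategy that pays no dividends on $[0,h/c_1]$ and then, on the event $\{\tau_1>h/c_1\}$, switches to an $\varepsilon$-optimal strategy starting from the deterministic position $(x_1+h,x_2+c_2 h/c_1)$. Since the complementary claim event contributes non-negatively to $V(x_1,x_2)$, this produces
\[
V(x_1,x_2)\ \ge\ e^{-(q+\lambda)h/c_1}\,V(x_1+h,x_2+c_2 h/c_1)\ \ge\ e^{-(q+\lambda)h/c_1}\,V(x_1+h,x_2),
\]
where the second inequality uses monotonicity in the second coordinate from (a); rearranging yields \eqref{eq_increments1}, and \eqref{eq_increments2} is analogous after exchanging the roles of the two branches.

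For (c) with $x_1\ge 0$, the strategy at $(x_1+h,x_2)$ that pays the extra $h$ immediately from the first branch and then follows an $\varepsilon$-optimal continuation from $(x_1,x_2)$ yields $V(x_1+h,x_2)\ge h+V(x_1,x_2)-\varepsilon$, which proves \eqref{eq_increments1lowerbound} and strict positivity in this regime. The subtlety I anticipate as the main obstacle is strict positivity when $x_1<0$ (forcing $x_2\ge 0$), since an instant dividend of size $h$ is not admissible from a negative first branch. Here I would wait deterministically until $T:=-x_1/c_1$; on the event $\{\tau_1>T\}$ the first branch at $(x_1+h,x_2)$ has grown to exactly $h$, so I may admissibly pay $h$ at time $T$, and afterwards, on either $\{\tau_1>T\}$ or $\{\tau_1\le T\}$, follow an $\varepsilon$-optimal strategy of $(x_1,x_2)$ applied verbatim—its admissibility at $(x_1+h,x_2)$ is again guaranteed by the shift argument of (a), and on $\{\tau_1\le T\}$ it produces identical dividends (up to a later ruin time). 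The additional contribution from the paid $h$ is $h\,e^{-(q+\lambda)T}>0$, so $V(x_1+h,x_2)>V(x_1,x_2)$. Finally, for (d), Lemma \ref{lemma:V_properties_lower_upper_bound} ensures $V$ is bounded on any compact subset $K\subset\mathcal{S}$, and on a bounded range of $h$ the prefactor $e^{(q+\lambda)h/c_i}-1$ is bounded by a constant multiple of $h$; the triangulation $V(\mathbf{x})-V(\mathbf{y})=[V(x_1,x_2)-V(y_1,x_2)]+[V(y_1,x_2)-V(y_1,y_2)]$ together with \eqref{eq_increments1}--\eqref{eq_increments2} then converts the coordinate-wise estimates into a local Lipschitz bound on $K$.
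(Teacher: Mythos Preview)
Your proposal is correct and follows essentially the same approach as the paper: monotonicity via the shift argument, the exponential upper bound via the ``wait $h/c_i$ and restart on $\{\tau_1>h/c_i\}$'' device, the linear lower bound via an immediate lump-sum payment when the relevant coordinate is non-negative, and local Lipschitz continuity as a consequence. The only noteworthy variation is in the strict positivity for $x_1<0$: the paper waits for the \emph{random} hitting time $\tau_h$ of level $h$ by the uncontrolled first branch (which coincides pathwise with the hitting time $\tau_0$ of $0$ starting from $x_1$), pays $h$ there, and obtains the sharper bound $V(x_1+h,x_2)\ge V(x_1,x_2)+h\,\mathbb{E}_{x_1,x_2}[e^{-q\tau_0}]$, whereas you wait the \emph{deterministic} time $T=-x_1/c_1$ and pay only on $\{\tau_1>T\}$, yielding the weaker but still strictly positive $h\,e^{-(q+\lambda)T}$; both constructions suffice for the stated lemma.
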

	
	\begin{proof}
		The property of being componentwise (non-strictly) increasing, the upper bounds in equations \eqref{eq_increments1} and \eqref{eq_increments2}, as well as equations \eqref{eq_increments1lowerbound} and \eqref{eq_increments2lowerbound} can be proven exactly as in the proof of \cite[Lemma 3.2]{Azcue2018}. Hence, we only show the lower bounds in \eqref{eq_increments1} and \eqref{eq_increments2}, where due to analogy, we restrict to the former. If $x_1\geq 0$, $x_2\in\mathbb{R}$, then the statement is a direct consequence of \eqref{eq_increments1lowerbound}. Hence, we need to show that for $x_2\geq 0,~h>0, ~x_1<0$ it holds that 
		
		\begin{equation*}
			0 < V(x_1+h,x_2) - V(x_1,x_2). 
		\end{equation*}
	    W.l.o.g. we may assume that $x_1+h\le0$ because otherwise, it follows that 
	    
	    \begin{equation*}
	        V(x_1+h, x_2) - V(x_1,x_2) \geq V(x_1+h,x_2) -V(0,x_2) \geq x_1+h > 0,
	    \end{equation*} since $V$ is increasing and due to \eqref{eq_increments1lowerbound}.
		The rest of the proof is done by construction. Given an $\varepsilon>0$, let $\mathbf{L}^0=(L_1^0,L_2^0)\in\Pi_{x_1,x_2}$ be a strategy such that
		
		\begin{equation*}
			V^{\mathbf{L}^0}(x_1,x_2) > V(x_1,x_2) - \varepsilon.	
		\end{equation*} 
		We define a new strategy $\mathbf{L}=(L_1(t),L_2(t))\in\Pi_{x_1+h,x_2}$ as follows:
		
		\begin{itemize}
			\item Dividends from branch two are paid according to strategy $L_2^0$, i.e. $L_2(t) := L_2^0(t)$.
			\item Branch one pays no dividends as long as its uncontrolled surplus is below $+h$. Once the surplus hits $h$, it pays immediately $h$ as a lump sum, setting the controlled surplus to $0$. Afterward, it continues by paying dividends according to $L_1^0$. 
		\end{itemize}
		By construction we have $\big(\tau^{\mathbf{L}}\big|\mathbf{X}(0) = (x_1+h,x_2)\big) = \big(\tau^{\mathbf{L}^0}\big|\mathbf{X}(0) = (x_1,x_2)\big)$. Moreover, for any $s\geq 0$ we define the stopping time $\tau_s$ as the first time the process $X_1$ reaches $s$, i.e.
		
		\begin{equation*}
			\tau_s :=  \inf\left\{ 0\leq t\leq \tau^{\mathbf{L}}: ~X_1(t) = s	\right\}, 
		\end{equation*}
		where $\inf \emptyset = \infty$. Then it holds that
		
		\begin{align*} 
			\mathbb{P}_{x_1+h,x_2}(\tau_h \in {}\cdot{}) = \mathbb{P}_{x_1,x_2}(\tau_0 \in {}\cdot{}).
		\end{align*}  We conclude 
		
		\begin{align*}
			V^{\mathbf{L}}(x_1+h,x_2) 
			&= \mathbb{E}_{x_1,x_2}\left[\int_0^{\tau^{\mathbf{L}^0}}e^{-qs} \diff L_1^0(s) +\int_0^{\tau^{\mathbf{L}^0}} e^{-qs} \diff L_2^0(s) \right] + \mathbb{E}_{x_1+h,x_2}\left[h\cdot e^{-q\cdot \tau_h} \right] \\
			&= V^{\mathbf{L}^0}(x_1,x_2) + \mathbb{E}_{x_1,x_2}\left[h\cdot e^{-q\cdot \tau_0} \right]. 
		\end{align*}
		By construction follows that 
		
		\begin{align*}
			V(x_1+h,x_2) \geq V^{\mathbf{L}}(x_1+h,x_2) &= V^{\mathbf{L}^0}(x_1,x_2) + \mathbb{E}_{x_1,x_2}\left[h\cdot e^{-q\cdot \tau_0} \right]  \\
			&> V(x_1,x_2) -\varepsilon + \mathbb{E}_{x_1,x_2}\left[h\cdot e^{-q\cdot \tau_0} \right] ,	
		\end{align*}
		which implies 
		
		\begin{equation*}
			V(x_1+h,x_2) \geq V(x_1,x_2) +  \mathbb{E}_{x_1,x_2}\left[h\cdot e^{-q\cdot \tau_0} \right]
		\end{equation*}
		since $\varepsilon>0$ was arbitrary. We note that $\mathbb{E}_{x_1,x_2}\left[h\cdot e^{-q\cdot \tau_0} \right]>0$, since the probability that $\tau_0$ is reached before the arrival of the first claim is strictly positive. Thus the proof is finished. 
	\end{proof}
	
	Our next result is the \emph{Dynamic Programming Principle}, which heuristically states that an optimal strategy must be optimal at any point in time. 
	
	\begin{proposition}[Dynamic Programming Principle] \label{Proposition_DPP}
		For any initial surplus $\mathbf{x} \in \mathcal{S}$ and any stopping time $\tau$ it holds that
		
		\begin{align*}
			V(\mathbf{x}) & = \sup_{\mathbf{L} \in \Pi_{\mathbf{x}}} \mathbb{E}_{\mathbf{x}} \Bigg[ \int_{0}^{\tau \wedge \tau^{\mathbf{L}}} e^{-qs} \diff L_1(s) + \int_{0}^{\tau \wedge \tau^{\mathbf{L}}} e^{-qs} \diff L_2(s) \\ 
			&\qquad \ \qquad \ \qquad   + e^{-q(\tau \wedge \tau^{\mathbf{L}})} V\left(X_1^{\mathbf{L}}(\tau \wedge \tau^{\mathbf{L}}),X_2^{\mathbf{L}}(\tau \wedge \tau^{\mathbf{L}})\right)\Bigg].
		\end{align*}
	\end{proposition}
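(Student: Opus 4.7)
The plan is to prove the standard two-sided inequality. I would fix $\mathbf{x} \in \mathcal{S}$ and a stopping time $\tau$, denote by $W(\mathbf{x})$ the right-hand side of the claimed identity, and show $V(\mathbf{x}) \le W(\mathbf{x})$ and $V(\mathbf{x}) \ge W(\mathbf{x})$ separately. Throughout, the crucial structural fact I rely on is that our controlled process $\mathbf{X}^{\mathbf{L}}$ is driven by the compound Poisson claim process $S$ together with the deterministic drift $\mathbf{c}t$, so the strong Markov property of the underlying Lévy process $\mathbf{X}$ transfers to $\mathbf{X}^{\mathbf{L}}$ as long as $\mathbf{L}$ is adapted to the natural filtration $\mathbb{F}$ of $\mathbf{X}$.

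For the inequality $V(\mathbf{x}) \le W(\mathbf{x})$, I would take an arbitrary $\mathbf{L} \in \Pi_{\mathbf{x}}$ and split the integral defining $V^{\mathbf{L}}(\mathbf{x})$ at $\tau \wedge \tau^{\mathbf{L}}$:
\begin{equation*}
V^{\mathbf{L}}(\mathbf{x}) = \mathbb{E}_{\mathbf{x}}\Bigl[\int_0^{\tau\wedge\tau^{\mathbf{L}}} e^{-qs} \diff \mathbf{L}(s)\Bigr] + \mathbb{E}_{\mathbf{x}}\Bigl[e^{-q(\tau\wedge\tau^{\mathbf{L}})}\int_0^{\tau^{\mathbf{L}}-\tau\wedge\tau^{\mathbf{L}}} e^{-qs}\diff \tilde{\mathbf{L}}(s)\Bigr],
\end{equation*}
where $\tilde{\mathbf{L}}(s) := \mathbf{L}(\tau\wedge\tau^{\mathbf{L}} + s) - \mathbf{L}(\tau\wedge\tau^{\mathbf{L}})$ is the shifted dividend strategy. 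On the event $\{\tau \wedge \tau^{\mathbf{L}} < \tau^{\mathbf{L}}\}$, the shifted strategy is admissible starting from $\mathbf{X}^{\mathbf{L}}(\tau\wedge\tau^{\mathbf{L}}) \in \mathcal{S}$; on the complementary event the second summand vanishes because $V \equiv 0$ on $\mathbb{R}^2_{<0}$ and the inner integral is empty. Conditioning on $\mathcal{F}_{\tau\wedge\tau^{\mathbf{L}}}$ and applying the strong Markov property, the conditional expectation of the shifted integral is bounded by $V(\mathbf{X}^{\mathbf{L}}(\tau\wedge\tau^{\mathbf{L}}))$ by the definition of the optimal value function; taking the supremum over $\mathbf{L} \in \Pi_{\mathbf{x}}$ yields $V(\mathbf{x}) \le W(\mathbf{x})$.

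For the reverse inequality, given $\varepsilon > 0$ I would first choose, for every state $\mathbf{y} \in \mathcal{S}$, a strategy $\mathbf{L}^{\mathbf{y},\varepsilon} \in \Pi_{\mathbf{y}}$ with $V^{\mathbf{L}^{\mathbf{y},\varepsilon}}(\mathbf{y}) \ge V(\mathbf{y}) - \varepsilon$, selected in a $\mathcal{F}$-measurable way as a function of $\mathbf{y}$. Then, for an arbitrary $\mathbf{L} \in \Pi_{\mathbf{x}}$ and the stopping time $\tau\wedge\tau^{\mathbf{L}}$, I would concatenate: define $\bar{\mathbf{L}}$ to coincide with $\mathbf{L}$ up to time $\tau\wedge\tau^{\mathbf{L}}$ and thereafter to follow $\mathbf{L}^{\mathbf{X}^{\mathbf{L}}(\tau\wedge\tau^{\mathbf{L}}),\varepsilon}$ applied to the post-$\tau\wedge\tau^{\mathbf{L}}$ evolution. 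One checks that $\bar{\mathbf{L}}$ is componentwise non-decreasing, càglàd, $\mathbb{F}$-predictable, and satisfies (i)--(iii) of Definition~\ref{def:dividend_strategy}, so $\bar{\mathbf{L}} \in \Pi_{\mathbf{x}}$. Using the strong Markov property again to evaluate the post-$\tau\wedge\tau^{\mathbf{L}}$ contribution gives
\begin{equation*}
V(\mathbf{x}) \ge V^{\bar{\mathbf{L}}}(\mathbf{x}) \ge \mathbb{E}_{\mathbf{x}}\Bigl[\int_0^{\tau\wedge\tau^{\mathbf{L}}} e^{-qs}\diff\mathbf{L}(s) + e^{-q(\tau\wedge\tau^{\mathbf{L}})} V(\mathbf{X}^{\mathbf{L}}(\tau\wedge\tau^{\mathbf{L}}))\Bigr] - \varepsilon,
\end{equation*}
and taking the supremum over $\mathbf{L}$ and sending $\varepsilon \to 0$ completes the proof.

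The main obstacle is the measurable selection step for the family $\{\mathbf{L}^{\mathbf{y},\varepsilon}\}_{\mathbf{y}\in\mathcal{S}}$ together with verifying admissibility of the concatenated strategy $\bar{\mathbf{L}}$, in particular its predictability with respect to $\mathbb{F}$ and conditions (ii)--(iii) at the gluing point $\tau\wedge\tau^{\mathbf{L}}$. The local Lipschitz continuity of $V$ established in Lemma~\ref{Lemma_V_properties_increasing_locally_Lipschitz} together with separability of $\mathcal{S}$ allows one to replace the pointwise $\varepsilon$-optimal selection by a piecewise-constant-in-$\mathbf{y}$ choice on a fine grid, thereby circumventing delicate measurability issues exactly as in the proof of the dynamic programming principle in \cite[Prop.~1.5]{Azcue2014}; only the changes forced by the extended solvency set $\mathcal{S}$ (which affect $\tau^{\mathbf{L}}$ and the boundary behaviour of $V$) need to be addressed in detail, and they are mild because $V$ vanishes on $\mathbb{R}^2_{<0}$ by convention.
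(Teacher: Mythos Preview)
Your outline is correct and follows essentially the same route as the paper: prove the two inequalities separately, with the direction $V\ge W$ handled by concatenating an arbitrary strategy up to the stopping time with an $\varepsilon$-optimal continuation, and avoid measurable selection by discretising the state space via the local Lipschitz continuity of $V$ from Lemma~\ref{Lemma_V_properties_increasing_locally_Lipschitz}.

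Two points where the paper is more careful than your sketch suggests. First, the paper does \emph{not} work directly with a general stopping time $\tau$: it proves the identity for a deterministic time $T$ and then invokes the standard argument of \cite[Ch.~II.2]{Zhu1992} to pass to stopping times. Your direct treatment of $\tau$ would require you to check that the $\varepsilon$-optimal continuation chosen on $\{\tau<\tau^{\mathbf{L}}\}$ glues predictably at a random time, which is exactly the subtlety the reduction to deterministic $T$ sidesteps. Second, the modification forced by the enlarged solvency set is \emph{not} merely that $V$ vanishes on $\mathbb{R}^2_{<0}$. The real issue is that at the gluing time one coordinate of $\mathbf{X}^{\mathbf{L}}$ may be strictly negative, and then admissibility condition~(ii) forbids paying a lump sum from that branch to reach a grid point. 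The paper therefore constructs the grid so that $0$ belongs to each coordinate sequence and distinguishes three cases: if both coordinates are non-negative one pays both down to the grid; if exactly one coordinate is negative one pays only from the positive branch and then runs the grid strategy $\mathbf{L}^i$ from the mixed point $(v_1^{(i)},X_2^{\mathbf{L}}(T))$ (or its analogue), using that this starting point dominates $(v_1^{(i)},v_2^{(i)})$ componentwise. This is the substantive new step compared with \cite{Azcue2014} and \cite{Azcue2018}, and your proposal should flag it rather than dismiss it as mild.
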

	
	\begin{proof}
		We use the same strategy as in the proof of \cite[Lemma 1.2]{Azcue2014}: We show the statement for a fixed time $T\geq 0$, and then the general case follows using the arguments given in \cite[Chapter II.2]{Zhu1992}. \\
		Set 
		
		\begin{align}
			v(\mathbf{x},T) & = \sup_{\mathbf{L} \in \Pi_{\mathbf{x}}} \mathbb{E}_{\mathbf{x}} \Biggl[ \int_{0}^{T \wedge \tau^{\mathbf{L}}} e^{-qs} \diff L_1(s) + \int_{0}^{T \wedge \tau^{\mathbf{L}}} e^{-qs} \diff L_2(s)  \nonumber \\
			& \mathrel{\phantom{= \sup_{\mathbf{L} \in \Pi_{\mathbf{x}}} \mathbb{E}_{\mathbf{x}} \Biggl[}}  + e^{-q(T \wedge \tau^{\mathbf{L}})} V\left(X_1^{\mathbf{L}}(T \wedge \tau^{\mathbf{L}}),X_2^{\mathbf{L}}(T \wedge \tau^{\mathbf{L}})\right)\Biggr]. \label{Proof_Proposition_DPP_0}
		\end{align}
		Then, by the same arguments as in \cite[Lemma 1.2]{Azcue2014}, we have for any $\mathbf{x}\in\mathcal{S}$ and $\mathbf{L}\in\Pi_{\mathbf{x}}$ that 
		
		\begin{equation*}
			V^{\mathbf{L}}(\mathbf{x}) \leq v(\mathbf{x},T).
		\end{equation*}
		The proof of the inverse inequality $v(\mathbf{x},T)\geq  V^{\mathbf{L}}(\mathbf{x})$ is also similar to \cite{Azcue2014}, but more involved due to the new cases that appear if one initial capital is strictly negative. Therefore we will go into detail here: \\   
		For any given $\varepsilon > 0$, we fix $\mathbf{L} \in \Pi_{\mathbf{x}}$ such that 
		
		\begin{align}\label{Proof_Proposition_DPP_1}
			\mathbb{E}_{\mathbf{x}} \Biggl[ \int_{0}^{T \wedge \tau^{\mathbf{L}}} e^{-qs} \diff L_1(s) + \int_{0}^{T \wedge \tau^{\mathbf{L}}} e^{-qs} \diff L_2(s) \nonumber \\ 
			\mathrel{\phantom{\mathbb{E}_{\mathbf{x}} \Biggl[}} + e^{-q(T \wedge \tau^{\mathbf{L}})} V\left(X_1^{\mathbf{L}}(T \wedge \tau^{\mathbf{L}}), X_2^{\mathbf{L}}(T \wedge \tau^{\mathbf{L}})\right) \Biggr] 
			& \ge v(\mathbf{x}, T) - \frac{\varepsilon}{2}.
		\end{align}
		By Lemma \ref{Lemma_V_properties_increasing_locally_Lipschitz} the optimal value function $V$ is increasing and continuous in $\mathcal{S}$, and hence we can construct monotonically increasing  sequences $(v_1^{(i)})_{i \in \mathbb{N}}$, $(v_2^{(i)})_{i \in \mathbb{N}}$ with $(v_1^{(1)},v_2^{(1)}) \in \mathcal{S} $, $0 \in (v_1^{(i)})_{i \in \mathbb{N}}$, $0 \in (v_2^{(i)})_{i \in \mathbb{N}}$ and $\lim_{i \rightarrow \infty} v_1^{(i)} = \lim_{i \rightarrow \infty} v_2^{(i)} = \infty$ such that if $y_1 \in [v_1^{(i)}, v_1^{(i+1)})$ or $y_2 \in [v_2^{(j)}, v_2^{(j+1)})$, then 
		
		\begin{equation*}
			V(y_1,x_2) - V(v_1^{(i)},x_2) < \frac{\varepsilon}{8} \quad \forall x_2 \in \mathbb{R}\quad \text{ and } \quad V(x_1, y_2) - V(x_1, v_2^{(j)}) < \frac{\varepsilon}{8} \quad \forall x_1 \in \mathbb{R}
		\end{equation*} 
		for $i\geq 0$, $j\geq 0$. W.l.o.g., in the following we solely use subscript $i$ to simplify the notation. This is possible as we can always insert additional elements into the sequences. Hence, we get 
		
		\begin{equation} \label{Proof_Proposition_DPP_2}
		    V(y_1,y_2) < V(v_1^{(i)},y_2) + \frac{\varepsilon}{8} < V(v_1^{(i)}, v_2^{(i)}) + \frac{\varepsilon}{4}.
		\end{equation} 
		
		Given $i\in\mathbb{N}$ we consider strategies $\mathbf{L}^i = (L_1^i(t), L_2^i(t))_{t \ge 0} \in \Pi_{(v_1^{(i)}, v_2^{(i)})}$ such that 
		
		\begin{equation}\label{Proof_Proposition_DPP_2b}
		    V(v_1^{(i)}, v_2^{(i)}) - V^{\mathbf{L}^i}(v_1^{(i)}, v_2^{(i)}) \leq \frac{\varepsilon}{4}.
		\end{equation}
		Based on these strategies we define a new strategy $\mathbf{L}^* = (L_1^*(t), L_2^*(t))_{t \ge 0}$ as follows: 
		
		\begin{itemize}
			\item If $\tau^{\mathbf{L}} \leq T$, set $L_1^*(t) = L_1(t)$ and $L_2^*(t) = L_2(t)$ for all $t \ge 0$. 
			\item If $\tau^{\mathbf{L}} > T$, set $L_1^*(t) = L_1(t)$ and $L_2^*(t) = L_2(t)$ for all $t \in [0,T]$. 
			\item If $\tau^{\mathbf{L}} > T$ and $t\geq T$, choose $i$ such that $X_1^{\mathbf{L}}(T) \in [v_1^{(i)}, v_1^{(i+1)})$ and $X_2^{\mathbf{L}}(T) \in [v_2^{(i)}, v_2^{(i+1)})$. We distinguish three cases: 
			\begin{itemize}
				\item If $X_1^{\mathbf{L}}(T) \geq 0$ and $X_2^{\mathbf{L}}(T) \geq 0$, then by assumption we have $v_1^{(i)} \geq 0$ and $v_2^{(i)} \geq 0$. In $\mathbf{L}^*$, branch one pays immediately $X_1^{\mathbf{L}}(T)- v_1^{(i)}$ and branch two pays immediately $X_2^{\mathbf{L}}(T)  - v_2^{(i)}$ as dividends at time $T$. Afterward we follow $\mathbf{L}^i$. 
				\item If $X_1^{\mathbf{L}}(T) \ge 0$ and $X_2^{\mathbf{L}}(T) < 0$ then by assumption $v_1^{(i)} \ge 0$. In $\mathbf{L}^*$, branch one pays immediately $X_1^{\mathbf{L}}(T)-v_1^{(i)}$ as dividends. Then we follow $\mathbf{L}^i$ from surplus $(v_1^{(i)}, X_2^{\mathbf{L}}(T))$.
				\item Similar to the previous case, if $X_1^{\mathbf{L}}(T) < 0$ and $X_2^{\mathbf{L}}(T) \ge 0$, branch two pays $X_2^{\mathbf{L}}(T)-v_2^{(i)}$ as dividends and then we follow $\mathbf{L}^i$ from surplus $(X_1^{\mathbf{L}}(T), v_2^{(i)})$.
			\end{itemize}
		\end{itemize}
		In the case of $\tau^{\mathbf{L}} > T$ and $t\geq T$, assuming that $X_1^{\mathbf{L}}(T) \in [v_1^{(i)}, v_1^{(i+1)})$ and $X_2^{\mathbf{L}}(T) \in [v_2^{(i)}, v_2^{(i+1)})$, it follows by \eqref{Proof_Proposition_DPP_2b}
	
			\begin{align} 
				V^{\mathbf{L}^*}(X_1^{\mathbf{L}}(T), X_2^{\mathbf{L}}(T)) &= \begin{cases}
					X_1^{\mathbf{L}}(T)- v_1^{(i)} + X_2^{\mathbf{L}}(T)- v_2^{(i)} + V^{\mathbf{L}_i}(v_1^{(i)}, v_2^{(i)}), & v_1^{(i)}\geq 0 , ~v_2^{(i)} \geq 0, \\ 
					X_1^{\mathbf{L}}(T)- v_1^{(i)} +  V^{\mathbf{L}_i}(v_1^{(i)}, X_2^{\mathbf{L}}(T)), &  v_1^{(i)}\geq 0,~v_2^{(i)}<0, \\
					X_2^{\mathbf{L}}(T)- v_2^{(i)} +  V^{\mathbf{L}_i}(X_1^{\mathbf{L}}(T), v_2^{(i)}), & v_1^{(i)}<0,~v_2^{(i)} \geq 0
				\end{cases} \nonumber \\
				& \ge \mathds{1}_{\{v_1^{(i)} \ge 0\}}\left(X_1^{\mathbf{L}}(T)- v_1^{(i)}\right) + \mathds{1}_{\{v_2^{(i)} \ge 0\}}\left(X_2^{\mathbf{L}}(T)- v_2^{(i)}\right) + V^{\mathbf{L}_i}(v_1^{(i)}, v_2^{(i)}) \nonumber  \\
				& \ge V^{\mathbf{L}_i}(v_1^{(i)}, v_2^{(i)}) \nonumber \\
				& \ge V(v_1^{(i)}, v_2^{(i)}) - \frac{\varepsilon}{4}. \label{Proof_Proposition_DPP_3}
			\end{align}
		Strategy $\mathbf{L}^*$ is then admissible and its value function can be obtained as 	
		
		\begin{align*}
			V^{\mathbf{L}^*}(\mathbf{x}) & = \mathbb{E}_{\mathbf{x}} \left[ \int_{0}^{T \wedge \tau^{\mathbf{L}}} e^{-qs} \diff L_1(s) + \int_{0}^{T \wedge \tau^{\mathbf{L}}} e^{-qs} \diff L_2(s) \right. \\
			& \qquad \ \qquad  \left. + \int_{T \wedge \tau^{\mathbf{L}}}^{\tau^{\mathbf{L}^*}} e^{-qs} \diff L_1^*(s) + \int_{T \wedge \tau^{\mathbf{L}}}^{\tau^{\mathbf{L}^*}} e^{-qs} \diff L_2^*(s) \right] \\
			& = \mathbb{E}_{\mathbf{x}} \left[ \int_{0}^{T \wedge \tau^{\mathbf{L}}} e^{-qs} \diff L_1(s) + \int_{0}^{T \wedge \tau^{\mathbf{L}}} e^{-qs} \diff L_2(s) \right. \\
			&\qquad \ \qquad \left. + e^{-q(T \wedge \tau^{\mathbf{L}})} \mathbb{E}_{\mathbf{X}^{\mathbf{L}}(T \wedge \tau^{\mathbf{L}})} \left[\int_{0}^{\tau^{\mathbf{L}^*}-(T \wedge \tau^{\mathbf{L}})} e^{-qs} \diff L_1^*(s+(T \wedge \tau^{\mathbf{L}})) \right. \right. \\
			& \qquad \ \qquad \ \qquad \ \qquad \ \qquad \ \qquad \ \quad \left. \left. + \int_{0}^{\tau^{\mathbf{L}^*}-(T \wedge \tau^{\mathbf{L}})} e^{-qs} \diff L_2^*(s+(T \wedge \tau^{\mathbf{L}})) \right] \right] \\
			& = \mathbb{E}_{\mathbf{x}} \Biggl[ \int_{0}^{T \wedge \tau^{\mathbf{L}}} e^{-qs} \diff L_1(s) + \int_{0}^{T \wedge \tau^{\mathbf{L}}} e^{-qs} \diff L_2(s) \\
			& \qquad \ \qquad + e^{-q(T \wedge \tau^{\mathbf{L}})} V^{\mathbf{L}^*}\left(X_1^{\mathbf{L}}(T \wedge \tau^{\mathbf{L}}), X_2^{\mathbf{L}}(T \wedge \tau^{\mathbf{L}}) \right) \Biggr].
		\end{align*}
		Now, under usage of  \eqref{Proof_Proposition_DPP_1}, \eqref{Proof_Proposition_DPP_2}, \eqref{Proof_Proposition_DPP_3} and Lemma \ref{lemma:V_properties_lower_upper_bound} the result follows, since 
		
		\begin{align*}
			v&(\mathbf{x},T) - V^{\mathbf{L}^*}(\mathbf{x}) \\ 
			& \le \mathbb{E}_{\mathbf{x}} \Biggl[ \int_{0}^{T \wedge \tau^{\mathbf{L}}} e^{-qs} \diff L_1(s) + \int_{0}^{T \wedge \tau^{\mathbf{L}}} e^{-qs} \diff L_2(s) + e^{-q(T \wedge \tau^{\mathbf{L}})} V\left(X_1^{\mathbf{L}}(T \wedge \tau^{\mathbf{L}}), X_2^{\mathbf{L}}(T \wedge \tau^{\mathbf{L}})\right) \Biggr] \\ 
			& \qquad -\mathbb{E}_{\mathbf{x}}\Biggl[ \int_{0}^{T \wedge \tau^{\mathbf{L}}} e^{-qs} \diff L_1(s) + \int_{0}^{T \wedge \tau^{\mathbf{L}}} e^{-qs} \diff L_2(s)   \\
			& \qquad \ \qquad + e^{-q(T \wedge \tau^{\mathbf{L}})} V^{\mathbf{L}^*}\left(X_1^{\mathbf{L}}(T \wedge \tau^{\mathbf{L}}), X_2^{\mathbf{L}}(T \wedge \tau^{\mathbf{L}}) \right) \Biggr] + \frac{\varepsilon}{2} \\
			& = \mathbb{E}_{\mathbf{x}} \left[ e^{-q(T \wedge \tau^{\mathbf{L}})} \left( V\left(X_1^{\mathbf{L}}(T \wedge \tau^{\mathbf{L}}), X_2^{\mathbf{L}}(T \wedge \tau^{\mathbf{L}})\right) - V^{\mathbf{L}^*}\left(X_1^{\mathbf{L}}(T \wedge \tau^{\mathbf{L}}), X_2^{\mathbf{L}}(T \wedge \tau^{\mathbf{L}})\right) \right) \right] + \frac{\varepsilon}{2}\\
			& \le  \mathbb{E}_{\mathbf{x}} \left[ e^{-q(T \wedge \tau^{\mathbf{L}})} \left( V\left(X_1^{\mathbf{L}}(T \wedge \tau^{\mathbf{L}}), X_2^{\mathbf{L}}(T \wedge \tau^{\mathbf{L}})\right) - \left( V(v_1^{(i)}, v_2^{(i)}) -\frac{\varepsilon}{4} \right) \right) \right] + \frac{\varepsilon}{2} \\
			& < \mathbb{E}_{\mathbf{x}} \left[ e^{-q(T \wedge \tau^{\mathbf{L}})} \left( \frac{\varepsilon}{4} +\frac{\varepsilon}{4} \right) \right] + \frac{\varepsilon}{2} \le \varepsilon
		\end{align*} and because $\varepsilon$ was arbitrary. 
	\end{proof}

    We now aim to derive the HJB equation. Therefore, recall the concept of the \textit{discounted infinitesimal generator}, cf. \cite[Sec. 1.4]{Azcue2014}, \cite[Eq. (7)]{Azcue2018}: Given a Markov process $\mathbf{S}$ in $\mathbb{R}^2$ and $\mathbf{x}\in\mathcal{S}$, set 
    
	\begin{equation*}
		\tilde{\mathcal{G}}(\mathbf{S},f)(\mathbf{x}) = \lim_{t\downarrow 0} \frac{\mathbb{E}_{\mathbf{x}}[e^{-qt}f(\mathbf{S}_t)]-f(\mathbf{x})}{t}
	\end{equation*}
	for any real-valued, continuously differentiable function $f$ on $\mathcal{S}$ such that the above limit exists. For our considerations we choose $\mathbf{S}(t) = \mathbf{X}^{\mathbf{L}}(t\wedge \tau^{\mathbf{L}})$, i.e., the controlled risk process stopped at ruin. \\
	Let $\ell_1,\ell_2\geq 0$ be constants and define the dividend strategy $\mathbf{L}$, that constantly pays dividends at rate $\ell_1,~\ell_2$ from branch one and two, whenever the respective surplus is non-negative. Then clearly $\mathbf{L}$ is admissible. Further, let $\tau_1$ be the first claim arrival time of $\mathbf{X}$. Using the same arguments as in \cite[Section 1.4]{Azcue2014}, we derive 
	
	\begin{equation} 
		\label{eq_discounted_infinitesimal_generator} 
		\begin{aligned}
			\tilde{\mathcal{G}} & \left(\left(\mathbf{X}^{\mathbf{L}}(t \wedge \tau^{\mathbf{L}})\right)_{t \ge 0}, f \right)(\mathbf{x})  = \tilde{\mathcal{G}} \left(\left(\mathbf{X}^{\mathbf{L}}(t \wedge \tau_1)\right)_{t \ge 0}, f \right)(\mathbf{x}) \\
			& = \left(c_1-\ell_1 \mathds{1}_{\{x_1 \ge 0\}}\right) f_{x_1}(\mathbf{x}) + \left(c_2-\ell_2 \mathds{1}_{\{x_2 \ge 0\}}\right) f_{x_2}(\mathbf{x}) - (\lambda+q) f(\mathbf{x}) + \lambda \mathcal{I}(f) (\mathbf{x}),
		\end{aligned}
	\end{equation}
	where $\mathcal{I}$ is an integral operator given via
	
	\begin{equation}
		\label{eq_IntegralOperator}
		\mathcal{I}(f) (\mathbf{x}) := \int_{0}^{(x_1/b_1) \vee (x_2/b_2)} f(x_1 -b_1 \alpha, x_2 -b_2 \alpha) \diff F(\alpha).
	\end{equation}
	Moreover, set 
	
	 \begin{equation}
		\label{eq_Definition_mathcalL}
		\mathcal{L}(V)(\mathbf{x}) := c_1 V_{x_1}(\mathbf{x}) + c_2 V_{x_2}(\mathbf{x}) - (q+\lambda) V(\mathbf{x}) + \lambda \mathcal{I}(V)(\mathbf{x}).
	\end{equation}
	The HJB equation, i.e. the integro-differential equation which is satisfied by the optimal value function, turns out to be 
	
		\begin{equation}
	    \label{eq_HJB}
		\max\{\mathds{1}_{\{x_1 \ge 0\}}(1-V_{x_1}(\mathbf{x})), \mathds{1}_{\{x_2 \ge 0\}}(1-V_{x_2}(\mathbf{x})), \mathcal{L}(V)(\mathbf{x})\} = 0
	\end{equation}
	for any $\mathbf{x}\in \mathcal{S}$ and in the following we explain its derivation: \\
	As the case $\mathbf{x}\in\mathbb{R}^2_{\geq 0}$ is completely similar to the derivation of the HJB equation in \cite{Azcue2018}, we will only explain the differences in the new case, where one surplus is strictly negative. Due to symmetry we consider w.l.o.g. $x_1>0,~x_2<0$. Assume that $V$ is continuously differentiable. Since $x_2<0$, Equations \eqref{eq_discounted_infinitesimal_generator} and \eqref{eq_IntegralOperator} reduce to
	
	\begin{align*}
		\tilde{\mathcal{G}} \left(\left(\mathbf{X}^{\mathbf{L}}(t \wedge \tau_1)\right)_{t \ge 0}, V \right) (\mathbf{x})
		& = (c_1-l_1) V_{x_1}(\mathbf{x}) + c_2 V_{x_2}(\mathbf{x}) - (\lambda+q) V(\mathbf{x}) + \lambda \mathcal{I}(V) (\mathbf{x}), \\
		\mathcal{I}(V) (\mathbf{x}) &= \int_{0}^{(x_1/b_1)} V(x_1 -b_1 \alpha, x_2 -b_2 \alpha) \diff F(\alpha). 
	\end{align*}
	Let $t>0$ such that $t < - \tfrac{x_2}{c_2}$ and such that $t<\tfrac{x_1}{l_1 - c_1}$, if $l_1>c_1$. From Proposition \ref{Proposition_DPP} with $\tau = t \wedge \tau_1 \leq \tau ^{\mathbf{L}}$ it follows that 
	
	\begin{align*}
		V(\mathbf{x}) & \geq \mathbb{E}_{\mathbf{x}} \Biggl[ \int_{0}^{t \wedge \tau_1} e^{-qs} \cdot  \ell_1 \diff s + \int_{0}^{t \wedge \tau_1} e^{-qs} \cdot \ell_2 \cdot \mathds{1}_{\{s \ge - \frac{x_2}{p_2}\}} \diff s \\
		& \qquad \ \qquad + e^{-q (t \wedge \tau_1)} V\left(X_1^{\mathbf{L}}(t \wedge \tau_1),X_2^{\mathbf{L}}(t \wedge \tau_1)\right) \Biggr].  \\
		&= \mathbb{E}_{\mathbf{x}} \left[ \int_{0}^{t \wedge \tau_1} e^{-qs} \cdot  \ell_1 \diff s + e^{-q (t \wedge \tau_1)} V\left(X_1^{\mathbf{L}}(t \wedge \tau_1),X_2^{\mathbf{L}}(t \wedge \tau_1)\right) \right]. 
	\end{align*} 
	This implies
	
	\begin{align*}
		0 &\geq \lim_{t \downarrow 0} \frac{\ell_1 \cdot \mathbb{E}_{\mathbf{x}} \left[ \int_{0}^{t \wedge \tau_1} e^{-qs} \diff s \right] + \mathbb{E}_{\mathbf{x}} \left[e^{-q (\tau_1 \wedge t)} V\left(X_1^{\mathbf{L}}(t \wedge \tau_1),X_2^{\mathbf{L}}(t \wedge \tau_1)\right) \right] - V(\mathbf{x})}{t}  \\
		& = \ell_1 + \tilde{\mathcal{G}}\left(\left(\mathbf{X}^{\mathbf{L}}(t \wedge \tau_1)\right)_{t \ge 0}, V\right)(\mathbf{x}),
	\end{align*}   
    and	we conclude
    
	\begin{equation*}
		\mathcal{L}(V)(\mathbf{x})	+ \ell_1 \cdot (1- V_{x_1}(\mathbf{x}))  \leq 0.
	\end{equation*}
	Lastly, choosing either $\ell_1=0$ or letting $\ell_1 \to\infty$ we obtain that 
	
	\begin{equation*}
		\max\left\{ 1-V_{x_1}(\mathbf{x}), \mathcal{L}(V)(\mathbf{x})\right\} \leq 0, 	
	\end{equation*}
	which is \eqref{eq_HJB} for $x_1>0, ~x_2<0$. The case $x_2 >0,~x_1 <0$ follows in complete analogy, where we obtain
	
	\begin{equation*}
		\max\left\{ 1-V_{x_2}(\mathbf{x}), \mathcal{L}(V)(\mathbf{x})\right\} \leq 0. 	
	\end{equation*}
	
	\begin{remark}
		Note that at first sight, Equations \eqref{eq_discounted_infinitesimal_generator}, \eqref{eq_IntegralOperator}, \eqref{eq_Definition_mathcalL}, and \eqref{eq_HJB} look very similar to equations (8), (9), (10) and (12) in \cite{Azcue2018}. The occurring indicator functions in \eqref{eq_discounted_infinitesimal_generator} and \eqref{eq_HJB} account only for the cases where one $x_i, ~i=1,2,$ is strictly negative, which implies that on $\mathbb{R}^2_{\geq 0}$ Equations \eqref{eq_discounted_infinitesimal_generator}, \eqref{eq_Definition_mathcalL} and \eqref{eq_HJB} superficially coincide with equations (8), (10) and (12) in \cite{Azcue2018}. The difference lies in the integral operator, as we need to allow to integrate up to the maximum instead of the minimum of $x_1/b_1$ and $x_2/b_2$ as $f$ is only assumed to be zero on the pure negative quadrant, while it may be strictly positive outside.  
	\end{remark}

	As usual for this type of problem, there are cases where the value function may not be differentiable and thus it may not fulfill the HJB equation in the classical sense. Thus, we use the notion of viscosity solutions in the following: We follow the definition given in \cite[Def. 3.4]{Azcue2018} 	and call a function $\underline{u} \colon \mathcal{S}^\circ \to \mathbb{R}$  \textbf{viscosity subsolution} of \eqref{eq_HJB} at a fixed point $\tilde{\mathbf{x}} \in \mathcal{S}^\circ$, if it is locally Lipschitz, and any continuously differentiable function $\psi \colon \mathcal{S}^\circ \to \mathbb{R}$ with $\psi(\tilde{\mathbf{x}}) = \underline{u}(\tilde{\mathbf{x}})$ such that $\underline{u} - \psi$ reaches its maximum in $\tilde{\mathbf{x}}$, satisfies 
	
		\begin{equation}
			\max\{\mathds{1}_{\{\tilde{x}_1 \ge 0\}}(1-\psi_{x_1}(\tilde{\mathbf{x}})), \mathds{1}_{\{\tilde{x}_2 \ge 0\}}(1-\psi_{x_2}(\tilde{\mathbf{x}})), \mathcal{L}(\psi)(\tilde{\mathbf{x}})\} \ge 0. \label{viscosity_subsolution}
		\end{equation}
	Moreover, a function $\overline{u} \colon \mathcal{S}^\circ \to \mathbb{R}$ is called a \textbf{viscosity supersolution} of \eqref{eq_HJB} at a fixed point $\tilde{\mathbf{x}} \in \mathcal{S}^\circ$, if it is locally Lipschitz, and any continuously differentiable function $\varphi \colon \mathcal{S}^\circ \to \mathbb{R}$ with $\varphi(\tilde{\mathbf{x}}) = \overline{u}(\tilde{\mathbf{x}})$ such that $\overline{u} - \varphi$ reaches its minimum in $\tilde{\mathbf{x}}$, satisfies 
	
		\begin{equation}
			\max\{\mathds{1}_{\{\tilde{x}_1 \ge 0\}}(1-\varphi_{x_1}(\tilde{\mathbf{x}})), \mathds{1}_{\{\tilde{x}_2 \ge 0\}}(1-\varphi_{x_2}(\tilde{\mathbf{x}})), \mathcal{L}(\varphi)(\tilde{\mathbf{x}})\} \le 0. \label{viscosity_supersolution}
		\end{equation}
	The functions $\psi$ and $\varphi$ are also called test functions. A function $u \colon \mathcal{S}^\circ \to \mathbb{R}$ is called \textbf{viscosity solution} at $\tilde{\mathbf{x}} \in \mathcal{S}^\circ$ if it is both a viscosity sub- and supersolution. 
	
	\begin{proposition} \label{Proposition_ViscositySolution}
		The optimal value function $V$ defined in \eqref{optimalvaluefct} is a viscosity solution of \eqref{eq_HJB} at any $\mathbf{x} \in \mathcal{S}^\circ$.
	\end{proposition}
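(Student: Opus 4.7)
The plan is to establish the two defining properties of a viscosity solution separately, after noting that local Lipschitz continuity of $V$ on $\mathcal{S}^\circ$ is already provided by Lemma \ref{Lemma_V_properties_increasing_locally_Lipschitz}. The two directions will follow the standard stochastic-control template: supersolution from testing with admissible strategies of constant dividend rates, subsolution from a contradiction argument against the Dynamic Programming Principle (Proposition \ref{Proposition_DPP}).

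For the supersolution part, fix $\tilde{\mathbf{x}} \in \mathcal{S}^\circ$ and a $C^1$ test function $\varphi$ with $\varphi(\tilde{\mathbf{x}}) = V(\tilde{\mathbf{x}})$ and $V-\varphi$ attaining its minimum at $\tilde{\mathbf{x}}$, so $V \ge \varphi$ locally. For arbitrary $\ell_1, \ell_2 \ge 0$, consider the admissible strategy $\mathbf{L}^{\ell_1,\ell_2}$ paying dividends at constant rates $\ell_i$ whenever the $i$-th controlled branch is nonnegative (as already used in the derivation of \eqref{eq_HJB}). Apply the DPP with stopping time $\tau = t \wedge \tau_1$ for small $t>0$, replace $V$ on the right by $\varphi$ using $V \ge \varphi$ at the endpoint (here one must be careful: the endpoint of the stopped trajectory stays near $\tilde{\mathbf{x}}$ for $t$ small, so the local inequality $V \ge \varphi$ applies), divide by $t$, and pass to the limit $t \downarrow 0$ to obtain
\begin{equation*}
\ell_1 \mathds{1}_{\{\tilde{x}_1 \ge 0\}}(1-\varphi_{x_1}(\tilde{\mathbf{x}})) + \ell_2 \mathds{1}_{\{\tilde{x}_2 \ge 0\}}(1-\varphi_{x_2}(\tilde{\mathbf{x}})) + \mathcal{L}(\varphi)(\tilde{\mathbf{x}}) \le 0.
\end{equation*}
Choosing $\ell_1=\ell_2=0$ yields $\mathcal{L}(\varphi)(\tilde{\mathbf{x}}) \le 0$, while sending $\ell_i \to \infty$ (one index at a time) forces the corresponding indicator term to be $\le 0$; together these imply \eqref{viscosity_supersolution}.

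For the subsolution part, fix $\tilde{\mathbf{x}} \in \mathcal{S}^\circ$ and a $C^1$ test function $\psi$ with $\psi(\tilde{\mathbf{x}}) = V(\tilde{\mathbf{x}})$ and $V-\psi$ attaining its maximum at $\tilde{\mathbf{x}}$, so $V \le \psi$ locally. Assume for contradiction that \eqref{viscosity_subsolution} fails, i.e.\ the maximum is strictly negative; by continuity of $\psi$ and its derivatives we can fix $\delta>0$ and a small open ball $B \subseteq \mathcal{S}^\circ$ around $\tilde{\mathbf{x}}$ on which all three expressions $\mathds{1}_{\{x_1\ge 0\}}(1-\psi_{x_1})$, $\mathds{1}_{\{x_2\ge 0\}}(1-\psi_{x_2})$, and $\mathcal{L}(\psi)$ stay below $-\delta$ (the indicators are locally constant on $B$, shrinking $B$ if necessary so that each coordinate keeps its sign or stays away from zero). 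For any admissible $\mathbf{L} \in \Pi_{\tilde{\mathbf{x}}}$ let $\tau_B := \inf\{t>0 : \mathbf{X}^{\mathbf{L}}(t) \notin B\} \wedge \tau_1$. Applying the discounted Itô/Dynkin formula to $\psi(\mathbf{X}^{\mathbf{L}}(\cdot))$ on $[0, \tau_B]$ and using the structure of the controlled process (finite variation, no jumps before $\tau_1$), one obtains
\begin{equation*}
\mathbb{E}_{\tilde{\mathbf{x}}}\!\left[\int_0^{\tau_B} e^{-qs} \diff L_1(s) + \int_0^{\tau_B} e^{-qs} \diff L_2(s) + e^{-q\tau_B}\psi(\mathbf{X}^{\mathbf{L}}(\tau_B))\right] \le \psi(\tilde{\mathbf{x}}) - \delta\, \mathbb{E}_{\tilde{\mathbf{x}}}[\tau_B].
\end{equation*}
Combining with $V \le \psi$ on $B$ (which covers the endpoint on the event $\{\tau_B < \tau_1\}$; on $\{\tau_B = \tau_1\}$ a separate estimate using the jump contribution in $\mathcal{L}(\psi)$ is built into the generator term), we deduce that the right-hand side of the DPP expression is uniformly bounded above by $V(\tilde{\mathbf{x}}) - \delta\,\mathbb{E}_{\tilde{\mathbf{x}}}[\tau_B]$, independent of $\mathbf{L}$. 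Taking the supremum over $\mathbf{L}$ contradicts Proposition \ref{Proposition_DPP}.

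The main obstacle will be the subsolution argument at boundary points of $\mathcal{S}^\circ$ where one coordinate is negative and the other positive: here the relevant indicator in $\mathcal{L}$ and in the HJB max is locally constant, but the integral operator $\mathcal{I}(\psi)$ reaches into the set where $V$ (and hence $\psi$ via the local inequality) is no longer controlled by the test function since a jump may leave $B$. This must be handled carefully by choosing $B$ so that the post-jump state is either inside the solvency set (where $V\le \psi$ cannot be used directly, so one substitutes the explicit integral operator $\mathcal{I}(\psi)$ built into the generator term, keeping the inequality intact) or in $\mathbb{R}_{<0}^2$ (where $V=0$ and the contribution simply vanishes). Once this bookkeeping is in place, the remainder of the argument is essentially a two-dimensional adaptation of the one-dimensional viscosity proof in \cite[Sec.~1.5]{Azcue2014} and its degenerate bivariate analogue in \cite[Prop.~3.5]{Azcue2018}.
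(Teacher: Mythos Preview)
Your supersolution argument is correct and essentially identical to the paper's (which in turn refers to \cite[Prop.~3.1]{Azcue2014}).

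Your subsolution argument, however, has a genuine gap at the contradiction step. You claim that the DPP expression is ``uniformly bounded above by $V(\tilde{\mathbf{x}}) - \delta\,\mathbb{E}_{\tilde{\mathbf{x}}}[\tau_B]$, independent of $\mathbf{L}$''. But $\tau_B$ is the exit time of the \emph{controlled} process and therefore depends on $\mathbf{L}$; for a strategy that pays an immediate lump sum large enough to leave $B$, one has $\tau_B=0$ and your bound collapses to $V(\tilde{\mathbf{x}})$. Since the DPP equates $V(\tilde{\mathbf{x}})$ with the \emph{supremum} over $\mathbf{L}$, a strategy-dependent strict inequality is not enough: you need a fixed $\varepsilon_0>0$ with $\text{DPP-quantity}\le V(\tilde{\mathbf{x}})-\varepsilon_0$ for \emph{all} admissible $\mathbf{L}$. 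Your Dynkin estimate, combined only with the global inequality $V\le\psi$ (which holds with equality at $\tilde{\mathbf{x}}$), does not deliver this.

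The paper closes exactly this gap, and this is the content of Lemma~\ref{Lemma_Proof_SubSolution_Construction}: one does \emph{not} work with the original test function but replaces it by a modified $\psi$ (obtained, as in \cite[Prop.~3.1]{Azcue2014}, by adding a quadratic penalty to the original test function) so that the three HJB inequalities still hold on a small box around $\tilde{\mathbf{x}}$ \emph{and} in addition $V\le\psi-2\varepsilon$ on an explicit set $B^*=B_1\cup B_2\cup B_3\cup B_4\cup\mathbb{R}^2_{<0}$. The stopping time is then tailored as $\tau^*=\overline{\tau}\wedge(\underline{\tau}+\theta)\wedge\tau^{\mathbf{L}}$ so that $\mathbf{X}^{\mathbf{L}}(\tau^*)\in B^*$ for every $\mathbf{L}$; the strict gap $2\varepsilon$ at the endpoint then produces the uniform estimate $V(\tilde{\mathbf{x}})\le\psi(\tilde{\mathbf{x}})-\varepsilon$, which is the contradiction. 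Incidentally, the ``main obstacle'' you flag (post-jump values of $\psi$ outside $B$) is not the crux---the integral operator and the martingale $\tilde{M}$ in the change-of-variable formula handle that automatically; the missing ingredient is the strict gap on the exit set, i.e., the construction in Lemma~\ref{Lemma_Proof_SubSolution_Construction}.
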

	The proof of Proposition \ref{Proposition_ViscositySolution} is naturally split into two parts that cover sub- and supersolution, respectively. The proof that $V$ is a supersolution follows the idea of  \cite[Prop. 3.1]{Azcue2014}. Mainly, it uses the same arguments as the derivation of the HJB equation. For the sake of brevity, the details will be omitted here. Proving that $V$ is a viscosity subsolution follows the main ideas of \cite[Prop. 3.1]{Azcue2014} as well and is done by contradiction. As our enlarged solvency set demands some additional care, we go a little more into detail here. For an easier understanding, we split the proof into Lemma \ref{Lemma_Proof_SubSolution_Construction} and Lemma \ref{Lemma_Proof_SubSolution_Contradiction}, which yield an immediate contradiction, showing that $V$ is indeed a viscosity subsolution. \\
	In the following, for notational simplicity, we abuse notation and define
	
	\begin{equation*}
		[a_1,b_1]\times[a_2,b_2] := \emptyset, \text{ if } a_1>a_2 \text{ or } b_1>b_2.	
	\end{equation*}
	
	\begin{lemma}\label{Lemma_Proof_SubSolution_Construction}
		Assume $V$ is not a viscosity subsolution of \eqref{eq_HJB} at $\tilde{\mathbf{x}} = (\tilde{x}_1,\tilde{x}_2) \in \mathcal{S}^\circ$. Then we can find $\varepsilon >0$, 
		
		\begin{equation} \label{eq_hSets}
			h \in \begin{cases}
				\left(0,\frac{1}{2}(\abs{\tilde{x}_1} \wedge \abs{\tilde{x}_2})\right), & \text{ if } \abs{\tilde{x}_1}\wedge\abs{\tilde{x}_2} > 0, \\
				\left(0, \frac{1}{2}(\abs{\tilde{x}_1} \vee \abs{\tilde{x}_2})\right), & \text{ if } \abs{\tilde{x}_1} \wedge \abs{\tilde{x}_2}=0,
			\end{cases} 
		\end{equation}
		and a continuously differentiable function $\psi \colon \mathbb{R}^2 \to \mathbb{R}$ such that $\psi$ is a test function for a subsolution of equation \eqref{eq_HJB} satisfying 
		
		\begin{align}
			\label{eq_Proof_SubSolution_Construction1}
			\mathds{1}_{\{\tilde{x}_1 \ge 0\}} (1 - \psi_{x_1}(\mathbf{x})) &\le 0 && \text{for } \mathbf{x} \in [0 \wedge \tilde{x}_1-h, \tilde{x}_1 + h] \times (-\infty,\tilde{x}_2+h],\\
			\label{eq_Proof_SubSolution_Construction2} 
			\mathds{1}_{\{\tilde{x}_2 \ge 0\}} (1 - \psi_{x_2}(\mathbf{x})) &\le 0 && \text{for } \mathbf{x} \in (-\infty,\tilde{x}_1+h] \times [0 \wedge \tilde{x}_2-h, \tilde{x}_2 + h], \\
			\label{eq_Proof_SubSolution_Construction3}
			\mathcal{L} (\psi) (\mathbf{x}) &\le -2 \varepsilon q && \text{for } \mathbf{x} \in [\tilde{x}_1 - h, \tilde{x}_1 + h] \times [\tilde{x}_2 - h, \tilde{x}_2 + h], \\
			\label{eq_Proof_SubSolution_Construction4}		
			V(\mathbf{x}) &\leq \psi(\mathbf{x}) - 2 \varepsilon && \text{for } \mathbf{x} \in B^*:=  B_1 \cup B_2 \cup B_3 \cup B_4 \cup \mathbb{R}_{<0}^2, 
		\end{align}
		where
		
		\begin{align*}
			B_1 &:=  \left(-\infty,\tilde{x}_1 +h \right) \times \left(-\infty, \tilde{x}_2 - \frac{h}{2}\right), & B_2 &:=  \left(-\infty, \tilde{x}_1 - \frac{h}{2}\right) \times \left(-\infty,\tilde{x}_2 + h \right), \\
			B_3&:= \{\tilde{x}_1 + h\} \times (-\infty, \tilde{x}_2 +h], & B_4 &:=  (-\infty, \tilde{x}_1 +h] \times \{\tilde{x}_2 + h\}.
		\end{align*}		
	\end{lemma}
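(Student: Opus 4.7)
My plan is to begin by applying the negation of the subsolution property at $\tilde{\mathbf{x}}$: the hypothesis produces a continuously differentiable test function $\phi\colon \mathcal{S}^\circ \to \mathbb{R}$ with $\phi(\tilde{\mathbf{x}}) = V(\tilde{\mathbf{x}})$, with $V-\phi$ attaining its maximum at $\tilde{\mathbf{x}}$, and with the quantity appearing in the HJB-max strictly negative at $\tilde{\mathbf{x}}$. This yields $\mathcal{L}(\phi)(\tilde{\mathbf{x}}) < 0$ and, whenever $\tilde{x}_i \ge 0$, $\phi_{x_i}(\tilde{\mathbf{x}}) > 1$. By the continuity of $\phi$, $\nabla\phi$, and $\mathcal{L}(\phi)$ on $\mathcal{S}^\circ$, I can choose $h$ from the prescribed range (the conditions on $h$ are precisely what guarantee that the closed box $[\tilde{x}_1-h,\tilde{x}_1+h] \times [\tilde{x}_2-h,\tilde{x}_2+h]$ stays inside $\mathcal{S}^\circ$) and some $\varepsilon > 0$ so that $\mathcal{L}(\phi) \le -3\varepsilon q$ on that central box and the strict derivative inequalities $\phi_{x_i} > 1$ persist wherever the relevant indicator does not vanish.

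The core of the argument is to promote $\phi$ to a function $\psi\colon \mathbb{R}^2 \to \mathbb{R}$ which preserves the local inequalities on the central box but additionally satisfies the strip conditions \eqref{eq_Proof_SubSolution_Construction1}, \eqref{eq_Proof_SubSolution_Construction2} and the global domination \eqref{eq_Proof_SubSolution_Construction4}. I intend to set $\psi = \phi$ on a closed neighborhood of $\tilde{\mathbf{x}}$ containing the central box (automatically preserving \eqref{eq_Proof_SubSolution_Construction3} and the local forms of the derivative inequalities), and to make $\psi$ agree with a large affine function $\mathbf{x} \mapsto A + x_1 + x_2$ outside a slightly larger neighborhood. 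Such an affine tail has $\psi_{x_i} \equiv 1$, delivering \eqref{eq_Proof_SubSolution_Construction1}-\eqref{eq_Proof_SubSolution_Construction2} in the strips, and by combining the upper bounds on $V$ from Lemma \ref{lemma:V_properties_lower_upper_bound} with the convention $V \equiv 0$ on $\mathbb{R}^2_{<0}$, choosing $A$ sufficiently large yields $\psi \ge V + 2\varepsilon$ on all of $B^*$. A smooth monotone cutoff interpolates between $\phi$ and the affine tail across a thin transition annulus, chosen so that $\psi_{x_i}$ remains $\ge 1$ throughout the strips where the inequality is required (which is possible because of the strict inequality $\phi_{x_i} > 1$ on the central region).

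The main obstacle is maintaining \eqref{eq_Proof_SubSolution_Construction3} after the modification: for $\mathbf{x}$ in the central box, $\mathcal{I}(\psi)(\mathbf{x})$ samples $\psi$ along the half-line $\alpha \mapsto \mathbf{x} - \alpha\mathbf{b}$, which leaves the central box and crosses the transition annulus where $\psi$ has been altered. I plan to exploit the slack of $\varepsilon q$ between the strengthened bound $\mathcal{L}(\phi) \le -3\varepsilon q$ and the target bound $\mathcal{L}(\psi) \le -2\varepsilon q$: by keeping the transition annulus thin and controlling $|\psi - \phi|$ on it (using that $\phi$ is bounded on the compact range of $\alpha$ values that matter and that the affine tail is tame), the perturbation in $\mathcal{I}$, and hence in $\mathcal{L}$, on the central box can be made smaller than $\varepsilon q$ uniformly. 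Once all four inequalities have been verified, $\psi$ is the desired test function, which will be fed into Lemma \ref{Lemma_Proof_SubSolution_Contradiction} to produce the contradiction establishing that $V$ is in fact a viscosity subsolution.
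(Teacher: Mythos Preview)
Your construction has a genuine gap in establishing \eqref{eq_Proof_SubSolution_Construction4}. The affine tail $\mathbf{x}\mapsto A+x_1+x_2$ cannot dominate $V$ on all of $B^*$, no matter how large $A$ is. Indeed, $B_1$ (and similarly $B_2$) contains points $(x_1,x_2)$ with $x_1\to-\infty$ while $0\le x_2<\tilde{x}_2-\tfrac{h}{2}$; this range of $x_2$ is nonempty whenever $\tilde{x}_2>0$, since then your choice of $h$ forces $h<\tfrac12\tilde{x}_2$. Such points lie in $\mathcal{S}$, and by Lemma~\ref{lemma:V_properties_lower_upper_bound} (with indices exchanged) one has $V(x_1,x_2)\ge x_2+\tfrac{c_2}{q+\lambda}>0$, whereas $A+x_1+x_2\to-\infty$. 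The upper bounds you invoke do not help here: for $x_1<0,\,x_2\ge 0$ they give $V(x_1,x_2)\le x_2+\tfrac{c_2}{q}+\tfrac{c_1}{q}e^{qx_1/c_1}$, which does \emph{not} decay linearly in $x_1$. Hence $V\le\psi-2\varepsilon$ fails on an unbounded portion of $B^*\cap\mathcal{S}$.

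The route the paper follows (via \cite[Prop.~3.1]{Azcue2014}, and made explicit in the Remark after Lemma~\ref{Lemma_Proof_SubSolution_Contradiction}) is not to replace $\phi$ far away, but to perturb it additively by a small \emph{quadratic}: $\psi(\mathbf{x})=\phi(\mathbf{x})+\kappa\big((x_1-\tilde{x}_1)^2+(x_2-\tilde{x}_2)^2\big)$ with $\kappa>0$ small. Since $V-\phi$ attains its (global) maximum $0$ at $\tilde{\mathbf{x}}$, one has $V\le\phi$ on $\mathcal{S}^\circ$, and on $B^*$ the distance to $\tilde{\mathbf{x}}$ is at least $\tfrac{h}{2}$, so the quadratic provides a uniform margin; moreover the quadratic grows fast enough at infinity to dominate $V$ (which grows at most linearly by Lemma~\ref{lemma:V_properties_lower_upper_bound}) and to stay above $0$ on $\mathbb{R}^2_{<0}$. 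Choosing $\kappa$ small keeps $\psi_{x_i}\ge 1$ on the strips (where $|x_i-\tilde{x}_i|\le h$ bounds the quadratic's contribution to $\partial_{x_i}\psi$) and keeps $\mathcal{L}(\psi)\le-2\varepsilon q$ on the central box, with the nonlocal perturbation $\lambda\big(\mathcal{I}(\psi)-\mathcal{I}(\phi)\big)$ controlled via the explicit constant $\kappa/\big((b_1^2+b_2^2)(\tfrac{\tilde{x}_1}{b_1}\vee\tfrac{\tilde{x}_2}{b_2})^2\big)$ indicated in the Remark, since the integration range in $\mathcal{I}$ is bounded.
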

	Note that the minima ($\wedge$) inside the intervals in \eqref{eq_Proof_SubSolution_Construction1} and \eqref{eq_Proof_SubSolution_Construction2} are only relevant if $\tilde{x}_1=0$ or $\tilde{x}_2=0$, respectively: If $\tilde{x}_i>0$ then \eqref{eq_hSets} ensures that $\tilde{x}_i-h>0$. 
	
	\begin{proof}[Proof of Lemma \ref{Lemma_Proof_SubSolution_Construction}]
        The proof follows the outline of the univariate case as presented in \cite[Prop. 3.1]{Azcue2014} and consists in constructing a test function $\psi$ that fulfills the desired properties.
	\end{proof}

	\begin{lemma}\label{Lemma_Proof_SubSolution_Contradiction}
		Assume $V$ is not a viscosity subsolution and let $\psi, ~\tilde{\mathbf{x}}, ~h, ~\epsilon$ be as in Lemma \ref{Lemma_Proof_SubSolution_Construction}. Then it holds that 
		\begin{equation*}
			V(\tilde{\mathbf{x}}) < \psi(\tilde{\mathbf{x}})	.
		\end{equation*}
	\end{lemma}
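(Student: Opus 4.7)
The plan is to obtain the stronger inequality $V^{\mathbf{L}}(\tilde{\mathbf{x}}) \le \psi(\tilde{\mathbf{x}}) - 2\varepsilon$ for every admissible strategy $\mathbf{L} \in \Pi_{\tilde{\mathbf{x}}}$; taking the supremum over $\mathbf{L}$ will then immediately yield $V(\tilde{\mathbf{x}}) \le \psi(\tilde{\mathbf{x}}) - 2\varepsilon < \psi(\tilde{\mathbf{x}})$. This contradicts the fact that, by definition of a test function for a subsolution, $\psi(\tilde{\mathbf{x}}) = V(\tilde{\mathbf{x}})$, so $V$ must after all be a viscosity subsolution at $\tilde{\mathbf{x}}$.

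To set up the argument, fix an arbitrary admissible $\mathbf{L}$ and let $\tau_R$ denote the first exit time of the controlled process $\mathbf{X}^{\mathbf{L}}$ from the rectangle $R := [\tilde{x}_1 - h, \tilde{x}_1 + h] \times [\tilde{x}_2 - h, \tilde{x}_2 + h]$ (possibly by a claim jumping out of $R$ or into $\mathbb{R}^2_{<0}$). Put $\tau := \tau_R \wedge \tau^{\mathbf{L}}$. By construction, $\mathbf{X}^{\mathbf{L}}(s) \in R$ on $[0,\tau)$ and $\mathbf{X}^{\mathbf{L}}(\tau) \in B^*$. The Dynamic Programming Principle (Proposition~\ref{Proposition_DPP}) then gives
\begin{equation*}
V^{\mathbf{L}}(\tilde{\mathbf{x}}) \le \mathbb{E}_{\tilde{\mathbf{x}}} \Bigl[\int_0^\tau e^{-qs}\,\mathrm{d}(L_1+L_2)(s) + e^{-q\tau} V(\mathbf{X}^{\mathbf{L}}(\tau))\Bigr],
\end{equation*}
and condition \eqref{eq_Proof_SubSolution_Construction4} applied at the endpoint replaces $V(\mathbf{X}^{\mathbf{L}}(\tau))$ by $\psi(\mathbf{X}^{\mathbf{L}}(\tau)) - 2\varepsilon$.

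Next, I apply the change-of-variables formula (Itô--Dynkin for piecewise-deterministic processes) to the bounded-variation process $s \mapsto e^{-qs}\psi(\mathbf{X}^{\mathbf{L}}(s))$ on $[0,\tau]$, accounting for (i) the smooth drift $c_1\psi_{x_1} + c_2\psi_{x_2}$, (ii) the dividend decrements $-\psi_{x_i}\,\mathrm{d}L_i$, (iii) the jumps at claim times compensated by $\lambda\bigl(\mathcal{I}(\psi) - \psi F((x_1/b_1)\vee(x_2/b_2))\bigr)$ plus the overshoot contribution into $\mathbb{R}^2_{<0}$. After taking expectation this rearranges to
\begin{equation*}
\mathbb{E}_{\tilde{\mathbf{x}}}\bigl[e^{-q\tau}\psi(\mathbf{X}^{\mathbf{L}}(\tau))\bigr] = \psi(\tilde{\mathbf{x}}) + \mathbb{E}_{\tilde{\mathbf{x}}}\Bigl[\int_0^\tau e^{-qs}\mathcal{L}(\psi)(\mathbf{X}^{\mathbf{L}}(s-))\,\mathrm{d}s\Bigr] - \mathbb{E}_{\tilde{\mathbf{x}}}\Bigl[\int_0^\tau e^{-qs}\bigl(\psi_{x_1}\,\mathrm{d}L_1 + \psi_{x_2}\,\mathrm{d}L_2\bigr)\Bigr],
\end{equation*}
where any overshoot claim lands the process in $\mathbb{R}^2_{<0} \subset B^*$ at time $\tau$, so such contributions are captured in the boundary term. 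Combining this with the DPP bound and substituting the three pointwise estimates from Lemma~\ref{Lemma_Proof_SubSolution_Construction} yields
\begin{equation*}
V^{\mathbf{L}}(\tilde{\mathbf{x}}) - \psi(\tilde{\mathbf{x}}) \le \mathbb{E}_{\tilde{\mathbf{x}}}\Bigl[\int_0^\tau e^{-qs}\mathcal{L}(\psi)\,\mathrm{d}s\Bigr] + \mathbb{E}_{\tilde{\mathbf{x}}}\Bigl[\int_0^\tau e^{-qs}\bigl((1-\psi_{x_1})\,\mathrm{d}L_1 + (1-\psi_{x_2})\,\mathrm{d}L_2\bigr)\Bigr] - 2\varepsilon\,\mathbb{E}_{\tilde{\mathbf{x}}}\bigl[e^{-q\tau}\bigr].
\end{equation*}
Using \eqref{eq_Proof_SubSolution_Construction3} the first expectation is at most $-2\varepsilon\,\mathbb{E}_{\tilde{\mathbf{x}}}[1 - e^{-q\tau}]$; the dividend term is nonpositive because admissibility forces $X_i^{\mathbf{L}}(s) \ge 0$ whenever $\mathrm{d}L_i(s) > 0$, so \eqref{eq_Proof_SubSolution_Construction1}--\eqref{eq_Proof_SubSolution_Construction2} apply; adding everything telescopes to $-2\varepsilon$.

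The principal technical obstacle is the clean application of the change-of-variables formula: one must correctly treat the controlled jump process stopped at ruin, match the compensator of the claim jumps with the integral operator $\mathcal{I}$ (whose truncation at $(x_1/b_1)\vee(x_2/b_2)$ precisely reflects the solvency set), argue that any claim-induced overshoot into $\mathbb{R}^2_{<0}$ coincides with the event $\{\tau = \tau^{\mathbf{L}}\}$ and is hence absorbed by the boundary estimate on $B^*$, and verify that the resulting local martingale is a true martingale on $[0,\tau]$ (using boundedness of $\psi$ on $R$ together with the linear growth bounds from Lemma~\ref{lemma:V_properties_lower_upper_bound} to dominate the jump integrand). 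Once this bookkeeping is carried out carefully, the inequality $V^{\mathbf{L}}(\tilde{\mathbf{x}}) \le \psi(\tilde{\mathbf{x}}) - 2\varepsilon$ drops out and the lemma follows.
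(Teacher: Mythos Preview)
Your approach is essentially the paper's: combine the Dynamic Programming Principle with a change-of-variables formula for $e^{-qs}\psi(\mathbf{X}^{\mathbf{L}}(s))$ up to a suitable exit time, then invoke the four properties of $\psi$ from Lemma~\ref{Lemma_Proof_SubSolution_Construction}. The one substantive difference is the stopping time. You stop at the first exit $\tau_R$ from the rectangle $R$, whereas the paper uses the delayed time $\tau^* = \overline{\tau}\wedge(\underline{\tau}+\theta)\wedge\tau^{\mathbf{L}}$; the extra $\theta$ forces the paper to bound $\mathcal{L}(\psi)$ by a constant $C$ on an auxiliary compact set during $[\underline{\tau},\tau^*]$ and to absorb a $C\theta\le\varepsilon/2$ error, ending with $V(\tilde{\mathbf{x}})\le\psi(\tilde{\mathbf{x}})-\varepsilon$. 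Your simpler stopping time sidesteps this detour and yields the cleaner $-2\varepsilon$; since $\mathbf{X}^{\mathbf{L}}(\tau_R)\in B^*$ directly (upward exit hits $B_3\cup B_4$; exit by claim, lump-sum or continuous dividend lands in $B_1\cup B_2\cup\mathbb{R}^2_{<0}$), this is legitimate.

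Two imprecisions to fix. First, your displayed change-of-variables identity writes $\psi_{x_i}\,\mathrm{d}L_i$ as if dividends were absolutely continuous; lump sums actually contribute $\int_0^{\Delta L_i}[1-\psi_{x_i}(\cdot)]\,\mathrm{d}\alpha$ along the jump segment (as in the paper's formula), and you must check that this segment stays in the region where \eqref{eq_Proof_SubSolution_Construction1}--\eqref{eq_Proof_SubSolution_Construction2} hold. Second, your justification ``admissibility forces $X_i^{\mathbf{L}}(s)\ge0$ whenever $\mathrm{d}L_i(s)>0$, so \eqref{eq_Proof_SubSolution_Construction1}--\eqref{eq_Proof_SubSolution_Construction2} apply'' is not how those conditions work: the indicator $\mathds{1}_{\{\tilde{x}_i\ge0\}}$ there refers to the \emph{fixed} base point, not the running process. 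The correct split (as the paper does) is on the sign of $\tilde{x}_i$: if $\tilde{x}_i<0$, the choice of $h$ keeps $X_i^{\mathbf{L}}<0$ on $R$, so $\mathrm{d}L_i\equiv0$ by admissibility and the term vanishes; if $\tilde{x}_i\ge0$, then \eqref{eq_Proof_SubSolution_Construction1}--\eqref{eq_Proof_SubSolution_Construction2} give $\psi_{x_i}\ge1$ on the relevant set.
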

	
	\begin{proof}[Proof of Lemma \ref{Lemma_Proof_SubSolution_Contradiction}]
		Also this proof follows the outline given in \cite[Prop. 3.1]{Azcue2014} and is similar to the proof of \cite[Thm. 3.5]{Azcue2018}. However, there are some decisive extra arguments needed in our extended setting, which is why we go a bit into detail here. \\
		Since $\psi$ is continuously differentiable, for any compact set $M\subset\mathbb{R}^2$ we can find $C\geq 0$ such that for any $\mathbf{x}\in M$ we have
		
		\begin{equation} \label{eq_Proof_M_compact_bounded}
			\mathcal{L}(\psi)(\mathbf{x}) \le C.
		\end{equation}
		Choose 
		
		\begin{equation} \label{eq_Proof_ThetaInequality}
			0<\theta < \min\left\{\frac{\varepsilon}{2 C}, \frac{1}{4q}, \frac{h}{2 \max\{c_1,c_2\}}\right\}
		\end{equation}
		and fix an admissible dividend policy $\mathbf{L}(t) = (L_1(t),L_2(t))\in\Pi_{\tilde{\mathbf{x}}}$ for all $t$.
		Similar to the proof of \cite[Thm. 3.5]{Azcue2018} we define the stopping times
		
		\begin{align*}
			\overline{\tau} & := \inf \{t > 0 \colon X_1^{\mathbf{L}}(t) \ge \tilde{x}_1 + h \quad \text{or} \quad X_2^{\mathbf{L}}(t) \ge \tilde{x}_2 + h\},\\
			\underline{\tau} & := \inf \{t > 0 \colon X_1^{\mathbf{L}}(t) \le \tilde{x}_1 - h \quad \text{or} \quad X_2^{\mathbf{L}}(t) \le \tilde{x}_2 - h\}
		\end{align*}and set 
		
		\begin{equation*}
			\tau^* := \overline{\tau} \wedge (\underline{\tau} + \theta) \wedge \tau^{\mathbf{L}},
		\end{equation*}
		where $\tau^{\mathbf{L}}$ is, as usual, the ruin time of the controlled process $\mathbf{X}^{\mathbf{L}}$. Clearly, $\tau^*< \infty$ for $h$ small enough. 
		Recall the sets $B^*,B_1,B_2,B_3$, and $B_4$ from Lemma \ref{Lemma_Proof_SubSolution_Construction}. Then by construction we have $\mathbf{X}^{\mathbf{L}}(\uline{\tau}+ \theta) \in B_1\cup B_2$, $\mathbf{X}^{\mathbf{L}}(\overline{\tau}) \in B_3\cup B_4$ and $\mathbf{X}^{\mathbf{L}}(\tau^{\mathbf{L}}) \in \mathbb{R}_{<0}^2.$ Hence, 
		
		\begin{equation} \label{eq_Proof_SubSolution_Contradiction_Proof1}
			\mathbf{X}^{\mathbf{L}}(\tau^*) \in B^*
		\end{equation}
		and consequently \eqref{eq_Proof_SubSolution_Construction4} implies
		
		\begin{equation} \label{eq_Proof_SubSolution_Contradiction_Proof2}
			V(\mathbf{X}^{\mathbf{L}}(\tau^*)) \le \psi(\mathbf{X}^{\mathbf{L}}(\tau^*)) - 2 \varepsilon. 
		\end{equation}
		Using a bivariate extension of \cite[Prop. 2.13]{Azcue2014} we obtain 
		
		\begin{equation} \label{eq_Proof_SubSolution_Contradiction_Proof3}
			\begin{aligned}
				e^{-q\tau^*}  & \psi \left(\mathbf{X}^{\mathbf{L}}(\tau^*)\right) - \psi(\tilde{\mathbf{x}}) \\
				& = \int_0^{\tau^*} e^{-qs} \mathcal{L}(\psi)\left(\mathbf{X}^{\mathbf{L}}(s-)\right) \diff s - \int_0^{\tau^*} e^{-qs} \diff \mathbf{L}(s) \\
				&\qquad + \int_0^{\tau^*} \left[1- \psi_{x_1}\left(\mathbf{X}^{\mathbf{L}}(s-)\right)\right] e^{-qs}\diff L_1^c(s)  + \int_0^{\tau^*} \left[1- \psi_{x_2}\left(\mathbf{X}^{\mathbf{L}}(s-)\right)\right] e^{-qs}\diff L_2^c(s) \\
				&\qquad  + \sum_{\substack {X_1(s)\neq X_1(s+) \\ s< \tau^*}} e^{-qs}\left(\int_0^{L_1(s+)-L_1(s)}\left[1- \psi_{x_1}\left( X_1^{\mathbf{L}}(s)-\alpha, X_2^{\mathbf{L}}(s)\right)\right]\diff \alpha \right)  \\
				&\qquad  + \sum_{\substack {X_2(s)\neq X_2(s+) \\ s< \tau^*}} e^{-qs}\left(\int_0^{L_2(s+)-L_2(s)}\left[1- \psi_{x_2}\left( X_1^{\mathbf{L}}(s), X_2^{\mathbf{L}}(s)-\alpha\right)\right]\diff \alpha \right) \\
				&\qquad   + \tilde{M}(\tau^*),
			\end{aligned}
		\end{equation}
		where 
		
		\begin{equation*} 
			\begin{aligned}
				\tilde{M}(t) &:= \sum_{\substack {\mathbf{X}^{\mathbf{L}}(s-)\neq \mathbf{X}^{\mathbf{L}}(s) \\ s\leq t}} e^{-qs}\left( u \left(\mathbf{X}^{\mathbf{L}}(s)\right)-u\left(\mathbf{X}^{\mathbf{L}}(s-)\right)\right) \\   
				& \qquad - \lambda \int_0^t e^{-qs} \int_0^{\infty} u\left(X_1^{\mathbf{L}}(s-)-b_1\alpha, X_2^{\mathbf{L}}(s-)-b_2\alpha\right) - u\left(\mathbf{X}^{\mathbf{L}}(s-)\right)\diff F(\alpha) \diff s
			\end{aligned}
		\end{equation*}
		defines a zero mean martingale. Fix $i=1,2$. If $\tilde{x}_i<0$,  then by construction of $h$ we have $X_i^{\mathbf{L}}(s) <0$ for any $s\in (0,\tau^*]$. Hence, by admissibility of the strategy, no dividends can be paid from branch $i$ and the respective integrals in \eqref{eq_Proof_SubSolution_Contradiction_Proof3} are zero. If on the other hand $\tilde{x}_i\geq 0$, then we may apply \eqref{eq_Proof_SubSolution_Construction1} or \eqref{eq_Proof_SubSolution_Construction2} to get 
		
		\begin{equation}  \label{eq_Proof_SubSolution_Contradiction_Proof3b}
		    e^{-q\tau^*}   \psi \left(\mathbf{X}^{\mathbf{L}}(\tau^*)\right) - \psi(\tilde{\mathbf{x}})  \le \int_0^{\tau^*} e^{-qs} \mathcal{L}(\psi)\left(\mathbf{X}^{\mathbf{L}}(s-)\right) \diff s - \int_0^{\tau^*} e^{-qs} \diff \mathbf{L}(s) + \tilde{M}(\tau^*).
		\end{equation}
		From \eqref{eq_Proof_SubSolution_Construction3}, \eqref{eq_Proof_M_compact_bounded}, and \eqref{eq_Proof_ThetaInequality}, we have 
		
		\begin{equation}
			\label{eq_Proof_SubSolution_Contradiction_Proof4}
			\begin{aligned}
				\int_0^{\tau^*} & e^{-qs} \mathcal{L}(\psi)\left(\mathbf{X}^{\mathbf{L}}(s-)\right) \diff s \\
				& = \int_0^{\tau^{\mathbf{L}} \wedge \underline{\tau}} e^{-qs} \mathcal{L}(\psi)\left(\mathbf{X}^{\mathbf{L}}(s-)\right) \diff s + \int_{\tau^{\mathbf{L}} \wedge \underline{\tau}}^{\tau^*} e^{-qs} \mathcal{L}(\psi)\left(\mathbf{X}^{\mathbf{L}}(s-)\right) \diff s  \\
				& \le - 2 \varepsilon q \int_0^{\tau^{\mathbf{L}} \wedge \underline{\tau}} e^{-qs} \diff s + C\theta  \\
				& \le  - 2 \varepsilon q \int_0^{\tau^{\mathbf{L}} \wedge \underline{\tau}} e^{-qs} \diff s + \frac{\varepsilon}{2}, 
			\end{aligned}
		\end{equation}
		where the inequality for the second integral holds, since $\mathbf{X}^{\mathbf{L}}(s-)$ is in the union of some compact set $M$ and $\mathbb{R}^2_{<0}$. This is because, due to admissibility, we can not force a branch to go negative by a dividend payment and because claims occur along the line $x_2 = \frac{b_2}{b_1}\cdot x_1$.  \\ 
		Now, the rest of the proof is completely similar to the proof of \cite[Prop. 3.1]{Azcue2014}: We use the Dynamic Programming Principle (Proposition \ref{Proposition_DPP}) together with Equations \eqref{eq_Proof_SubSolution_Contradiction_Proof2}, \eqref{eq_Proof_SubSolution_Contradiction_Proof3b} and \eqref{eq_Proof_SubSolution_Contradiction_Proof4} to obtain the desired inequality 
		
		\begin{align*}
			V(\tilde{\mathbf{x}}) &= \sup_{\mathbf{L} \in \Pi_{\tilde{\mathbf{x}}}} \mathbb{E}_{\tilde{\mathbf{x}}} \left[\int_{0}^{\tau^*} e^{-qs} \diff L_1(s) + \int_{0}^{\tau^*} e^{-qs} \diff L_2(s) + e^{-q\tau^*} V(\mathbf{X}^{\mathbf{L}}(\tau^*))\right] \\
			&\leq 	\psi(\tilde{\mathbf{x}}) - \varepsilon < \psi(\tilde{\mathbf{x}}). \qedhere
		\end{align*}
	\end{proof}
	
	\begin{remark}
		A comparison of our proof and the proof of Theorem 3.5 in \cite{Azcue2018} exhibits some flaws in the latter. The authors state that
		
		\begin{quotation}
			``$V(\overline{x})\leq \psi(\overline{x}) - 2\epsilon$ for $\overline{x}\in[-\infty,\overline{x}_0-h/2]\cup\{\overline{x}_0+h\}$''  
		\end{quotation}
		and later use the same stopping times $\overline{\tau}$, $\uline{\tau}$ and $\tau^*$ as we do. This however is not enough, as 
		
		\begin{equation*}
			\overline{X}(\tau^*) \in[-\infty,\overline{x}_0-h/2]\cup\{\overline{x}_0+h\}
		\end{equation*} does \emph{not} necessarily hold (see also \eqref{eq_Proof_SubSolution_Contradiction_Proof1}) and hence, the multivariate generalization of \cite[Eq. (3.20)]{Azcue2014} fails. \\
		Nevertheless, we emphasize that this does not affect the statement of \cite[Theorem 3.5]{Azcue2018} itself, as the proof may be fixed by adjusting the definition of $\psi_1$ as 
		
		\begin{equation*} 
			\psi_1(\mathbf{x}):=\psi_0(\mathbf{x}) + \left(\frac{\kappa}{(b_1^2+ b_2^2)\left(\frac{\tilde{x}_1}{b_1} \vee \frac{\tilde{x}_2}{b_2} \right)^2}\right)  \left((x_1-\tilde{x}_1)^2 + (x_2-\tilde{x}_2)^2\right)
		\end{equation*} 
		for any $\mathbf{x}=(x_1,x_2)\in\mathcal{S}$, in order to show that \eqref{eq_Proof_SubSolution_Construction4}	holds on the larger set $B^*$. 
	\end{remark}
	
	The following proposition is also called \emph{verification result}: 
	
	\begin{proposition} \label{Proposition_Verification}
		The optimal value function is the smallest viscosity solution $u$ of \eqref{eq_HJB} satisfying the growth conditions 
		
		\begin{equation} \tag{G1} \label{eq_GrowthCondition1}
			u(x_1,x_2) \leq K + (x_1 \vee 0) + (x_2 \vee 0) \text{ for some } K>0 \text{ and any }(x_1,x_2)\in\mathcal{S}
		\end{equation}
		and 
		
		\begin{equation} \tag{G2} \label{eq_GrowthCondition2}
			u(x_1,x_2) < u(x_1 + h, x_2), \qquad   u(x_1,x_2)  < u(x_1, x_2+h)
		\end{equation} for any $(x_1,x_2)\in\mathcal{S}$ and $h>0$.
	\end{proposition}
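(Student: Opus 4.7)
The plan is to split the statement into two pieces. First, I show that $V$ itself belongs to the class of viscosity solutions of \eqref{eq_HJB} satisfying \eqref{eq_GrowthCondition1} and \eqref{eq_GrowthCondition2}. Second, I prove that any viscosity supersolution $u$ of \eqref{eq_HJB} satisfying \eqref{eq_GrowthCondition1} dominates $V$ pointwise on $\mathcal{S}$; since every viscosity solution is in particular a supersolution, this implies minimality within the subclass prescribed by the proposition. The role of \eqref{eq_GrowthCondition2} is only to single out $V$ among candidates that might otherwise agree with $V$ on a subset while being degenerate elsewhere.

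The first part is essentially a bookkeeping step. Proposition \ref{Proposition_ViscositySolution} already yields the viscosity-solution property. The upper bounds in Lemma \ref{lemma:V_properties_lower_upper_bound} give \eqref{eq_GrowthCondition1}, bounding $V(x_1,x_2)$ by $x_1^{+}+x_2^{+}$ plus a constant depending on $c_1,c_2,\lambda,q$. Strict coordinatewise monotonicity \eqref{eq_GrowthCondition2} is exactly the strict lower bounds of \eqref{eq_increments1} and \eqref{eq_increments2} in Lemma \ref{Lemma_V_properties_increasing_locally_Lipschitz}.

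For the second part, I fix $\mathbf{x} \in \mathcal{S}^{\circ}$, an arbitrary admissible $\mathbf{L} \in \Pi_{\mathbf{x}}$, and set $\tau^{*} := T \wedge \tau^{\mathbf{L}}$ for $T>0$. I would apply the bivariate change-of-variable formula \eqref{eq_Proof_SubSolution_Contradiction_Proof3} to $e^{-qt}u(\mathbf{X}^{\mathbf{L}}(t))$ on $[0,\tau^{*}]$ -- or, since $u$ is merely locally Lipschitz, first to a smooth mollification $u_{\varepsilon}:=u\ast \rho_{\varepsilon}$, and then pass to $\varepsilon\downarrow 0$. Translating the viscosity supersolution inequality into the a.e. Lipschitz sense yields $\mathcal{L}(u) \le 0$ on $\mathcal{S}^{\circ}$ and $u_{x_i} \ge 1$ a.e. on $\{x_i \ge 0\}$, which makes every term on the right-hand side of the analogue of \eqref{eq_Proof_SubSolution_Contradiction_Proof3}, apart from the dividend integral, non-positive. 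Taking expectations and using that the compensated jump part is a zero-mean martingale gives
\begin{equation*}
u(\mathbf{x}) \ge \mathbb{E}_{\mathbf{x}}\!\left[\int_{0}^{\tau^{*}} e^{-qs}\,\diff L_{1}(s) + \int_{0}^{\tau^{*}} e^{-qs}\,\diff L_{2}(s) + e^{-q\tau^{*}}\, u\bigl(\mathbf{X}^{\mathbf{L}}(\tau^{*})\bigr)\right].
\end{equation*}
Letting $T\to\infty$, monotone convergence turns the dividend integrals into $V^{\mathbf{L}}(\mathbf{x})$, while the boundary term vanishes: on $\{T<\tau^{\mathbf{L}}\}$ growth condition \eqref{eq_GrowthCondition1} bounds it by $e^{-qT}\bigl(K+(x_{1}+c_{1}T)^{+}+(x_{2}+c_{2}T)^{+}\bigr)\to 0$, and on $\{\tau^{\mathbf{L}}\le T\}$ the convention $u\equiv 0$ on $\mathbb{R}^{2}_{<0}$ forces it to zero. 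Taking $\sup_{\mathbf{L}\in \Pi_{\mathbf{x}}}$ then yields $u(\mathbf{x})\ge V(\mathbf{x})$.

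The main obstacle is the mollification step: one has to verify that the nonlocal integral operator $\mathcal{I}$ from \eqref{eq_IntegralOperator} commutes with convolution up to an error that vanishes uniformly on the compact sets visited by $\mathbf{X}^{\mathbf{L}}$ before $\tau^{*}$, and that both the continuous and the jump contributions in \eqref{eq_Proof_SubSolution_Contradiction_Proof3} survive the limit $\varepsilon \downarrow 0$. A secondary subtlety, genuinely new compared with \cite{Azcue2018}, is the treatment of the coordinate axes $\{x_{1}=0\}$ and $\{x_{2}=0\}$, where the indicator in \eqref{eq_HJB} switches on and where the upper limit $(x_1/b_1)\vee(x_2/b_2)$ in $\mathcal{I}$ behaves differently than in the strictly positive quadrant; the extension of $u$ from $\mathcal{S}^{\circ}$ to its closure and to $\mathbb{R}^{2}_{<0}$ must be consistent with this domain so that the supersolution inequality applied along trajectories entering the mixed-sign region remains meaningful.
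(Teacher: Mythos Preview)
Your overall architecture matches the paper's: establish that $V$ lies in the class via Proposition~\ref{Proposition_ViscositySolution} and Lemmas~\ref{lemma:V_properties_lower_upper_bound}--\ref{Lemma_V_properties_increasing_locally_Lipschitz}, then show every viscosity supersolution with the growth conditions dominates each $V^{\mathbf{L}}$ through the bivariate change-of-variable identity \eqref{eq_Proof_SubSolution_Contradiction_Proof3} and a passage $T\to\infty$. The paper simply cites \cite[Prop.~4.4]{Azcue2014} for this second step, so your more explicit sketch is a faithful unpacking of the same argument.

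One point of divergence is worth flagging. You assert that the comparison step needs only \eqref{eq_GrowthCondition1}, relegating \eqref{eq_GrowthCondition2} to a cosmetic role. The paper, by contrast, explicitly invokes \emph{both} conditions on the supersolution $\overline{u}$ when appealing to \cite[Prop.~4.4]{Azcue2014}. The monotonicity hypothesis \eqref{eq_GrowthCondition2} is not obviously redundant: the supersolution inequality in \eqref{eq_HJB} yields $u_{x_i}\ge 1$ only on $\{x_i\ge 0\}$, whereas \eqref{eq_GrowthCondition2} demands strict monotonicity on all of $\mathcal{S}$, including the mixed-sign regions. In the approximation step you yourself single out as the main obstacle (passing from the viscosity inequality to something usable inside \eqref{eq_Proof_SubSolution_Contradiction_Proof3} via mollification or inf/sup-convolution), monotonicity is typically what guarantees that the nonlocal operator $\mathcal{I}$ and the extension of $u$ across the axes behave correctly; it also ensures the approximants remain supersolutions of a controlled perturbation. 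Before claiming \eqref{eq_GrowthCondition2} is dispensable, you should check precisely where \cite{Azcue2014} uses the analogous hypothesis and whether that use survives the enlarged solvency set $\mathcal{S}$.
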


	\begin{proof}
	    Similar to \cite[Prop. 4.4]{Azcue2014} one can show that for arbitrary $\mathbf{L}\in\Pi_{\mathbf{x}}$ and any viscosity supersolution $\overline{u}$ of \eqref{eq_HJB} satisfying \eqref{eq_GrowthCondition1} and \eqref{eq_GrowthCondition2}, it holds that $V^{\mathbf{L}}(\mathbf{x})\leq \overline{u}(\mathbf{x})$. Together with Proposition \ref{Proposition_ViscositySolution} this implies the statement. 
	\end{proof}
	
	\section{Bang strategies} \label{Section_CandidateOptimalStrategy}
	In this section we choose a heuristic approach to define a class of strategies that appeal to be optimal. We show that in certain subcases, the optimal strategy indeed lies in this class and we reduce the control problem defined in \eqref{optimalvaluefct} to a univariate one.  \\
	The idea behind the strategies is as follows: If we face ruin at time $\tau^{\mathbf{L}}$ then it is desirable to have a relatively small surplus right before $\tau^{\mathbf{L}}$ because it implies that we paid more dividends before ruin. As we allow that one branch of the process becomes negative, all possible dividends can be paid from one branch (at every $t\geq 0$) to ensure that there is no capital ``wasted'' at the time of ruin. We thus consider strategies that pay dividends according to the following principles: 

	\begin{itemize}
		\item One branch follows some admissible, one-dimensional dividend strategy, 
		\item the other branch pays dividends as follows: \begin{enumerate}[label=(\roman*)]
			\item If the surplus is positive, the whole surplus is immediately paid as a lump sum.
			\item If the surplus is zero, all incoming premia are continuously paid as dividends.
			\item If the surplus is negative, no dividends are paid until the branch reaches zero again. 
		\end{enumerate}	
	\end{itemize}
	Inspired by the well-known \emph{Bang-bang controls} (see e.g. \cite[Sec. 6.5]{Rolewicz1987}) we call these strategies \emph{bang strategies} as they always pay the maximum dividends possible from one branch. 
	Let $\kappa \in\{1,2\}$ and fix $\iota = 3-\kappa$ such that in particular $(\kappa,\iota) = (1,2)$ or $(2,1)$. Formally, for any $\mathbf{x} \in\mathcal{S}$ we set
	
	\begin{align*} 
		\Pi^{*\kappa}_{\mathbf{x}} &:= \Big\{\mathbf{L}=(L_1,L_2) : ~L_\kappa \text{ is admissible for }X_\kappa(t), \\
		& \qquad \ \qquad  L_\iota(t) = (x_\iota \vee 0) + c_\iota\cdot \int_0^{t} \mathds{1}_{\{\overline{X}_\iota(s)\geq 0\}} \cdot \mathds{1}_{\{\overline{X}_\iota(s) = X_\iota(s)\}} \diff s \Big\}  \nonumber 
	\end{align*}
	where $\overline{X}_\iota(s) := \sup_{0\leq r \leq s} X_\iota(r)$ denotes the running supremum of $X_\iota$. We define the class of bang strategies as
	
	\begin{equation}
		\Pi^*_{\mathbf{x}} := \Pi^{*1}_{\mathbf{x}} \cup \Pi^{*2}_{\mathbf{x}},
	\end{equation}
	where by construction any strategy in $\Pi^*_{\mathbf{x}}$ is admissible.
	Clearly, for any $\mathbf{L}^{1}\in\Pi^{*1}_{\mathbf{x}}$ the ruin time $\tau^{\mathbf{L}^{1}}$ is equal to the ruin time of the first branch, denoted by $\tau^{L_1}$, whereas for any $\mathbf{L}^{2}\in\Pi^{*2}_{\mathbf{x}}$ the ruin time $\tau^{\mathbf{L}^{2}}$ is equal to the ruin time of the second branch $\tau^{L_2}$. This is, because any occurring claim affects both branches of our risk process and by construction of $\mathbf{L}^{1}$, $\mathbf{L}^{2}$ any occurring claim leads to a negative surplus in the second or first branch, respectively. Further, for any univariate admissible strategy $L$ defined on branch $i$ it holds that 
	
	\begin{equation} \label{eq_bangStrategiesPayMost}
	L_i^*(t) := (x_i \vee 0) + c_i\cdot \int_0^{t} \mathds{1}_{\{\overline{X}_i(s)\geq 0\}} \cdot \mathds{1}_{\{\overline{X}_i(s)= X_i(s)\}} \diff s \geq L(t). 
	\end{equation} 
	In the following, to facilitate reading, we are going to use the expression ``strategy of type $\mathbf{L}^{*1}$'' to refer to a strategy in $\Pi_{\mathbf{x}}^{*1}$ and similarly for $\Pi_{\mathbf{x}}^{*2}$. \\ 
	Our next goal is to show that the optimal strategy for the control problem \eqref{optimalvaluefct} lies in $\Pi_{\mathbf{x}}^*$. 
	We start by proving that strategies of type $\mathbf{L}^{*1}$ and $\mathbf{L}^{*2}$ are the optimal choice for certain subsets of all admissible strategies. To do this, we define the sets  
	
	\begin{align*}
		\mathcal{D}_1&:= \{\mathbf{x} \in\mathcal{S}: ~(b_2/b_1)x_1\geq x_2\}, &		\mathcal{D}_2&:= \{\mathbf{x} \in\mathcal{S}: ~(b_2/b_1)x_1\leq x_2\}.
	\end{align*}
	By construction, the controlled process $\mathbf{X}^{\mathbf{L}}(t)$ can neither exit $\mathcal{D}_1^\circ$ into $\mathcal{D}_2$ nor exit $\mathcal{D}_2^\circ$ into $\mathcal{D}_1$ by a claim. Such a change can only happen by two events: 
	
    \begin{enumerate}[label=(\roman*)]
		\item The process deterministically creeps from $\mathcal{D}_2$ into $\mathcal{D}_1$, induced by the collected premia and by \eqref{eq_Assumption_c1b1c2b2}, or 
		\item the process is forced to change from one set to the other by dividend payments (continuously or by a lump sum).
	\end{enumerate} 
	
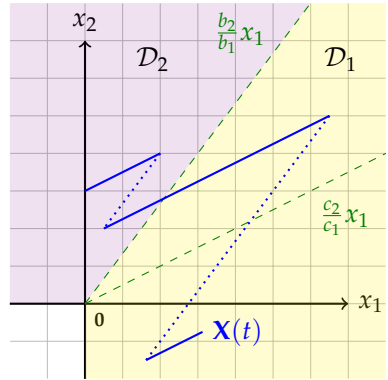
\begin{figure}[ht]
\centering
\begin{tikzpicture}[x=.5cm, y=.5cm, smooth]
   \draw [color=gray!50] [step=5mm] (-1.99,-1.99) grid (7.99,7.99);
   
   \draw[->,thick] (-2,0) -- (7,0) node[right] {$x_1$};
   \draw[->,thick] (0,-2) -- (0,7) node[above] {$x_2$};
   \node[below right]{$\scriptstyle\mathbf{0}$};
   
   \fill[color=violet, opacity = 0.12] (0,0) -- (-2,0) -- (-2,8) -- (6,8) -- (0,0);
   \fill[color=yellow, opacity = 0.18] (0,0) -- (0,-2) -- (8,-2) -- (8,8) -- (6,8) -- (0,0);
   
   	\draw[style= dashed, color=green!50!black] (0,0) --(6,8)node[near end, above=0.6cm, right=-0.65cm]{\color{green!50!black} $\frac{b_2}{b_1}x_1$};
	\draw[style= dashed, color=green!50!black] (0,0) -- (8,4)node[near end, below right]{\color{green!50!black} $\frac{c_2}{c_1}x_1$};
	\node at (6.8,6.5) {$\mathcal{D}_1$};
	\node at (1.8,6.5) {$\mathcal{D}_2$};
	
	\draw[thick, color=blue] (0,3) -- (2,4);
	\draw[thick, style=dotted, color=blue] (2,4) -- (0.5,2);
	\draw[thick, color=blue] (0.5,2) -- (6.5,5);
	\draw[thick, style=dotted, color=blue] (6.5,5) -- (1.625,-1.5);
	\draw[thick, color=blue] (1.625,-1.5) -- (3.125,-0.75) node[right]{\color{blue} $\mathbf{X}(t)$};
\end{tikzpicture}
\label{fig_D1D2}
\caption{Visualization of the sets $\mathcal{D}_1$, $\mathcal{D}_2$ and a sample path of $\mathbf{X}$.}
\end{figure}

	Note that event (i) can be interpreted as a special case of event (ii) since it corresponds to paying no dividends at this particular time. Hence, changes between the sets are determined by the dividend policy. \\ 
	For $\mathbf{x}\in \mathcal{D}_i$, $i=1,2$, let $\Pi^i_{\mathbf{x}}$ be the set of all admissible strategies $\mathbf{L}$ which ensure that $\mathbf{X}^{\mathbf{L}}$ stays in $\mathcal{D}_i$ until ruin. By the previous reasoning the sets $\Pi^1_{\mathbf{x}},~\Pi^2_{\mathbf{x}}$ are well-defined.
	
	\begin{Theorem} \label{Theorem_OptimalityIndicator_SpecialStrategy}
		The optimal strategy in $\Pi_{\mathbf{x}}^1$ is of type $\mathbf{L}^{*1}$, whereas the optimal strategy in $\Pi_{\mathbf{x}}^2$ is of type $\mathbf{L}^{*2}$.
	\end{Theorem}
	
	\begin{proof} We show the statement for $\Pi^1_{\mathbf{x}}$ as the proof for the second case is completely similar. 
		Let $\mathbf{L}^0 = (L_1^0,L_2^0)$ be any strategy in $\Pi^1_{\mathbf{x}}$. Define another strategy $\mathbf{L}^1$ as 
		
		\begin{equation*} 	
			\mathbf{L}^1 := (L_1^1, L_2^1) := (L_1^0, L_2^*),	
		\end{equation*} with $L_2^*$ as in \eqref{eq_bangStrategiesPayMost}. Then $\mathbf{L}^1\in\Pi^1_{\mathbf{x}}$ and it holds that $L_2^1(t) \geq L_2^0(t)$ for any $0\leq t\leq \tau^{\mathbf{L}^1}$. Hence, 
		
		\begin{equation*} 
		\mathbf{L}^1(t) \geq \mathbf{L}^0(t)  \qquad \text{ for any } 0<t\leq \tau^{\mathbf{L}^1} \leq \tau^{\mathbf{L}^0}.	
		\end{equation*}
		Moreover, by definition of $\mathbf{L}^0$ and $\mathbf{L}^1$ we have for all $0\leq t \leq \tau^{\mathbf{L}_0}$
		
		\begin{align*} 
			 \frac{b_2}{b_1} \cdot X_1^{\mathbf{L}^1}(t) = \frac{b_2}{b_1} \cdot X_1^{\mathbf{L}^0}(t) \geq X_2^{\mathbf{L}^0}(t) \geq X_2^{\mathbf{L}^1}(t),	
		\end{align*}
		which directly implies that $\tau^{\mathbf{L}^1} = \tau^{\mathbf{L}^0} = \tau^{L^0_1}$. Hence, $V^{\mathbf{L}^1}(\mathbf{x})\geq V^{\mathbf{L}^0}(\mathbf{x})$ and the statement follows as $\mathbf{L}^0\in\Pi^1_{\mathbf{x}}$ was arbitrary. 
	\end{proof}
	
	\begin{remark}
		Note that in contrast to the results in \cite[Section 4.3]{Azcue2018}, Theorem \ref{Theorem_OptimalityIndicator_SpecialStrategy} implies that a strategy that stays in $\mathcal{D}_1\cap \mathcal{D}_2 = \{(x_1, \frac{b_1}{b_2}x_1), ~x_1\geq 0\}$ until ruin can \emph{never} be optimal in our setting.
	\end{remark}
Next, we show that under some additional assumptions a strategy of type $\mathbf{L}^{*1}$ is indeed optimal:

	\begin{Theorem} \label{Theorem_BetterStrategyConstruction}
 		Let $b_1 \leq b_2$ and let $\mathbf{x}$ in $\mathcal{D}_1$. Then for any strategy $\mathbf{L}^0 = (L_1^0,L_2^0)\in\Pi_{\mathbf{x}}$ there exists a strategy $\mathbf{L}^1\in\Pi_{\mathbf{x}}^1$ such that $V^{\mathbf{L}^0}(\mathbf{x}) \leq V^{\mathbf{L}^1}(\mathbf{x})$.
	\end{Theorem}
	
	Before we begin with the proof of Theorem \ref{Theorem_BetterStrategyConstruction}, we prove a preparatory lemma: 
	
	\begin{lemma}
		It holds that
		
		\begin{equation} \label{eq_Proof_D1Strategy_5}
			c_2 \cdot \left(\int_0^t \mathds{1}_{\{\overline{X}_2(s) = X_2(s)\}}\diff s - t \right) \geq 	\frac{b_2}{b_1} \cdot c_1 \cdot \left(\int_0^t \mathds{1}_{\{\overline{X}_1(s) = X_1(s)\}}\diff s - t \right).
		\end{equation}	
	\end{lemma}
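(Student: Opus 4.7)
The plan is to relate both sides of \eqref{eq_Proof_D1Strategy_5} to the drawdown processes $D_i(t) := \overline{X}_i(t) - X_i(t) \ge 0$ and then compare the drawdowns of the two branches directly.

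First, I would establish the Skorokhod-type identity
\begin{equation*}
c_i\bigl(t - A_i(t)\bigr) = b_i S(t) - D_i(t), \qquad A_i(t) := \int_0^t \mathds{1}_{\{\overline{X}_i(s) = X_i(s)\}}\diff s, \quad i = 1,2.
\end{equation*}
This follows from a direct pathwise analysis of $D_i$: between claims, $D_i$ decreases at rate $c_i$ whenever $D_i > 0$ and stays constant when $D_i = 0$; at each claim $\tau_k$, the drawdown jumps up by $b_i U_k$ because $\overline{X}_i$ is left unchanged while $X_i$ drops by $b_i U_k$. Integrating yields $D_i(t) = b_i S(t) - c_i\bigl(t - A_i(t)\bigr)$, which is the identity after rearrangement. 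Substituting this into \eqref{eq_Proof_D1Strategy_5}, the $S(t)$-terms cancel (the factor $b_2/b_1$ converts $b_1 S(t)$ on the right into $b_2 S(t)$), reducing the claim to the purely pathwise inequality $D_2(t) \ge \tfrac{b_2}{b_1}\, D_1(t)$.

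For this latter inequality I would exploit that both coordinates are driven by the same claim process, so
\begin{equation*}
X_2(s) - \frac{b_2}{b_1} X_1(s) = \Bigl(x_2 - \frac{b_2}{b_1} x_1\Bigr) + \beta s, \qquad \beta := c_2 - \frac{b_2}{b_1} c_1 \le 0,
\end{equation*}
where $\beta \le 0$ is precisely assumption \eqref{eq_Assumption_c1b1c2b2}. Picking $s^\star \in [0,t]$ with $X_1(s^\star) = \overline{X}_1(t)$ and noting that $\overline{X}_2(t) \ge X_2(s^\star)$, subtracting the above identity at $s^\star$ and at $t$ gives
\begin{equation*}
\overline{X}_2(t) - \frac{b_2}{b_1} \overline{X}_1(t) \ge X_2(s^\star) - \frac{b_2}{b_1} X_1(s^\star) = X_2(t) - \frac{b_2}{b_1} X_1(t) + \beta(s^\star - t) \ge X_2(t) - \frac{b_2}{b_1} X_1(t),
\end{equation*}
the last step because $\beta \le 0$ and $s^\star \le t$. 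Rearranging yields $D_2(t) \ge \tfrac{b_2}{b_1} D_1(t)$, which combined with the Skorokhod identity closes the argument.

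The main technical subtlety is that for a c\`adl\`ag path, $\overline{X}_1(t)$ may be attained only as a left-limit at a claim time rather than at some $s^\star \in [0,t]$. This is minor, however, since $X_1$ and $X_2$ share their jump times and are continuous between claims, so the deterministic linear relation between $X_2(s)$ and $X_1(s)$ passes to left-limits and the argument above survives by applying it at $s^\star = \tau_k^-$ for the appropriate claim time $\tau_k$. All remaining steps are routine manipulations.
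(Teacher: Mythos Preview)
Your proof is correct and follows essentially the same route as the paper: both reduce the inequality to the drawdown comparison $D_2(t)\ge \tfrac{b_2}{b_1}D_1(t)$ via the identity $D_i(t)=b_iS(t)-c_i\int_0^t\mathds{1}_{\{\overline{X}_i(s)\neq X_i(s)\}}\diff s$, and then use assumption \eqref{eq_Assumption_c1b1c2b2} to compare the running suprema. The only cosmetic difference is that the paper manipulates the supremum directly (bounding $\sup_s(\tfrac{b_1}{b_2}c_2 s - b_1 S(s)) + (c_1-\tfrac{b_1}{b_2}c_2)t$ from below by $\overline{X}_1(t)$), whereas you pick the argmax $s^\star$ of $X_1$ and exploit the affine relation $X_2(s)-\tfrac{b_2}{b_1}X_1(s)=\text{const}+\beta s$; these are two phrasings of the same idea.
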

	\begin{proof}
Let 

\begin{equation} \label{eq_Proof_D1Strategy_Definition_Oi}
O_i(t) := \overline{X}_i(t) - X_i(t) = \sum_{n=1}^{N(t)} b_i U_n - c_i \int_0^t \mathds{1}_{\{\overline{X}_i(s) \neq X_i(s)\}}\diff s
\end{equation} be the offset of branch $i$ from its running supremum. Note that the offset is independent of the initial value $X_i(0)=x_i$. Hence, w.l.o.g. we may assume $x_1=x_2= 0$. It holds that 

\begin{equation} \label{eq_Proof_D1Strategy_InequalityOi}
	\begin{aligned}
			\frac{b_1}{b_2} O_2(t) &= \frac{b_1}{b_2} \overline{X}_2(t) - \frac{b_1}{b_2} X_2(t) \\
			&= \sup_{0\leq s\leq t} \left( \frac{b_1}{b_2}c_2s - \sum_{n=1}^{N(s)} b_1 U_n\right) + \left(c_1- \frac{b_1}{b_2}c_2 \right)t - X_1(t) \\
			&\geq \sup_{0\leq s\leq t} \left( \frac{b_1}{b_2}c_2s + \left(c_1- \frac{b_1}{b_2}c_2 \right)s - \sum_{n=1}^{N(s)} b_1 U_n\right) - X_1(t) \\
			&= O_1(t), 	
	\end{aligned}
\end{equation}
where for the inequality we used that $(c_1-\frac{b_1}{b_2}c_2)t$ is monotonically increasing by \eqref{eq_Assumption_c1b1c2b2}. Equations \eqref{eq_Proof_D1Strategy_Definition_Oi} and \eqref{eq_Proof_D1Strategy_InequalityOi} imply 

	\begin{align*}
			c_2 \cdot \int_0^t \mathds{1}_{\{\overline{X}_2(s) \neq X_2(s)\}}\diff s &= \sum_{n=1}^{N(t)} b_2 U_n - O_2(t) 
			= \frac{b_2}{b_1}\cdot\left(\sum_{n=1}^{N(t)} b_1 U_n - \frac{b_1}{b_2} \cdot O_2(t)  \right) \\
			& \leq  \frac{b_2}{b_1}\cdot \left(\sum_{n=1}^{N(t)} b_1 U_n - O_1(t)  \right) 
			 = \frac{b_2}{b_1} \cdot c_1 \cdot\int_0^t \mathds{1}_{\{\overline{X}_1(s) \neq X_1(s)\}}\diff s,
		\end{align*} 
which finishes the proof, as 

\begin{equation*}
	\int_0^t\mathds{1}_{\{\overline{X}_i(s) \neq X_i(s)\}}\diff s = t-\int_0^t\mathds{1}_{\{\overline{X}_i(s) = X_i(s)\}}\diff s. \qedhere
\end{equation*}
\end{proof}
	
Now we are ready to prove Theorem \ref{Theorem_BetterStrategyConstruction}: 

\begin{proof}[Proof of Theorem \ref{Theorem_BetterStrategyConstruction}]
	We start by showing the statement for the slightly stronger assumption that $\mathbf{x}\in\mathcal{D}_1\cap \mathbb{R}^2_{\geq 0}$. Let $\mathbf{L}^0=(L^0_1,L^0_2)\in\Pi_\mathbf{x}$ be arbitrary. For any $t\geq 0$ we define a strategy $\mathbf{L}^1=(L_1^1,L_2^1)$ by
	
		\begin{align}
			\label{eq_Proof_D1Strategy_1} L_1^1(t) &:= \min\left\{L_1^0(t), x_1 -\frac{b_1}{b_2}x_2 + \left(c_1- \frac{b_1}{b_2}\cdot c_2\right) t  + \frac{b_1}{b_2} \cdot L_2^0(t) \right\}, \\
			\label{eq_Proof_D1Strategy_2}	L_2^1(t) &:= L^*_2(t) .
		\end{align}
		Note that $L_1^1$ is increasing by \eqref{eq_Assumption_c1b1c2b2} and nonnegative as $\mathbf{x}\in\mathcal{D}_1$. Moreover, admissibility of $L_1^1$ follows directly from admissibility of $\mathbf{L}^0$. By definition, $L_2^1$ is admissible and it is clear that $\mathbf{X}^{\mathbf{L}^1}(t)\in\mathcal{D}_1$ for all $t\geq 0$. Hence, it remains to show that indeed $V^{\mathbf{L}^1}(\mathbf{x}) \geq  V^{\mathbf{L}^0}(\mathbf{x})$. To accomplish this, we show 
		
		\begin{equation} \label{eq_Proof_D1Strategy_3}
			\tau^{\mathbf{L}^0} \leq \tau^{\mathbf{L}^1} \qquad 	\text{ and } \qquad L_1^1(t) + L_2^2(t)  \geq L_1^0(t) + L_2^0(t), \qquad \forall t\geq 0.	
		\end{equation}
		By \eqref{eq_Proof_D1Strategy_1} it holds that
		
		\begin{equation} \label{eq_Proof_D1Strategy_NeededInRemark}
		\begin{aligned}
			X_1^{\mathbf{L}^1}(t) &= x_1 + c_1 t -\sum_{n=1}^{N(t)} b_1 U_n - L_1^1(t) \\ 
			&= \max\left\{x_1 + c_1 t -\sum_{n=1}^{N(t)} b_1 U_n - L_1^0(t), \frac{b_1}{b_2}\left( x_2 + c_2 t -\sum_{n=1}^{N(t)} b_2 U_n - L_2^0(t)\right)\right\} \\
			&= \max\left\{ X_1^{\mathbf{L}^0}(t), \frac{b_1}{b_2}\cdot X_2^{\mathbf{L}^0}(t)\right\} 
		\end{aligned}
		\end{equation}
		and hence $\tau^{\mathbf{L}^0}\leq \tau^{\mathbf{L}^1}$. \\ 
		To show the second inequality in \eqref{eq_Proof_D1Strategy_3} we consider two separate cases. Let first $\mathbf{X}^{\mathbf{L}^0}(t)\in\mathcal{D}_1$. Then $\frac{b_2}{b_1} X_1^{\mathbf{L}^0}(t) \geq X_2^{\mathbf{L}^0}(t)$, which is equivalent to 
		
		\begin{align*}
			L_1^0(t) & \leq x_1 - \frac{b_1}{b_2}\cdot x_2 + \left(c_1- \frac{b_1}{b_2}\cdot c_2\right) t  + \frac{b_1}{b_2} \cdot L_2^0(t).
		\end{align*}
		Hence, in this case the minimum in \eqref{eq_Proof_D1Strategy_1} is attained by $L_1^0(t)$. Moreover, by definition of $L_2^*(t)$ we have $L_2^1(t)\geq L_2^0(t)$ for any $t\geq 0$, and thus 
		
		\begin{equation*}
			L_1^1(t) + L_2^2(t)  \geq L_1^0(t) + L_2^0(t).
		\end{equation*}
		If otherwhise $\mathbf{X}^{\mathbf{L}^0}(t)\in\mathcal{D}_2$, then analogously we have  
		
		\begin{equation} \label{eq_Proof_D1Strategy_4}
			L_2^0(t) \leq x_2 - \frac{b_2}{b_1}\cdot x_1 + \left(c_2- \frac{b_2}{b_1}\cdot c_1\right) t  + \frac{b_2}{b_1} \cdot L_1^0(t),
		\end{equation}  
		and the minimum in \eqref{eq_Proof_D1Strategy_1} is attained by the second term. We use \eqref{eq_Proof_D1Strategy_4} to show
		
		\begin{equation} \label{eq_Proof_D1Strategy_8}
			\begin{aligned}
				L_1^0(t)  + & \left(1-\frac{b_1}{b_2}\right) L_2^0(t)  - \left( x_1 -\frac{b_1}{b_2}x_2 + \left(c_1- \frac{b_1}{b_2}\cdot c_2\right) t\right) \\ 
				&\leq L_1^0(t) + \left(1-\frac{b_1}{b_2}\right)\cdot \left(x_2 - \frac{b_2}{b_1}\cdot x_1 + \left(c_2- \frac{b_2}{b_1}\cdot c_1\right) t  + \frac{b_2}{b_1} \cdot L_1^0(t) \right) \\
				&\qquad - \left( x_1 -\frac{b_1}{b_2}x_2 + \left(c_1- \frac{b_1}{b_2}\cdot c_2\right) t\right) \\
				&= \frac{b_2}{b_1} \left( L_1^0(t) - x_1 + \frac{b_1}{b_2} x_2 -c_1t + \frac{b_1}{b_2}\cdot c_2 t  \right).
			\end{aligned}
		\end{equation}
		Note that, here, we used $b_1\leq b_2$ to ensure that $1-\frac{b_1}{b_2}\geq 0$. \\ 
		Moreover, it holds that (see \eqref{eq_bangStrategiesPayMost})
		
		\begin{equation*}
			L_1^0(t) \leq L^*_1(t) =  x_1  + c_1\cdot \int_0^{t} \mathds{1}_{\{\overline{X}_1(s)= X_1(s)\}} \diff s.
		\end{equation*}
		With this, and by \eqref{eq_Proof_D1Strategy_5} we have 
	
		\begin{equation} \label{eq_Proof_D1Strategy_9}
			\begin{aligned}
				\frac{b_2}{b_1} \cdot &\left( L_1^0(t) -x_1 +\frac{b_1}{b_2} x_2 - c_1 t + \frac{b_1}{b_2}c_2 t\right) \\
				&\leq \frac{b_2}{b_1} \cdot \left(  x_1  + c_1\cdot \int_0^{t} \mathds{1}_{\{\overline{X}_1(s)= X_1(s)\}} \diff s -x_1 +\frac{b_1}{b_2} x_2 - c_1 t + \frac{b_1}{b_2}c_2 t\right) \\
				&= \frac{b_2}{b_1} \cdot c_1\cdot \left( \int_0^{t} \mathds{1}_{\{\overline{X}_1(s) = X_1(s)\}} \diff s -t \right) + x_2 + c_2 t \\ 
				&\leq  x_2  + c_2\cdot \int_0^{t} \mathds{1}_{\{\overline{X}_2(s) = X_2(s)\}} \diff s = L_2^*(t) = L_2^1(t),
			\end{aligned} 
		\end{equation} since $x_2\geq 0$.
		We combine \eqref{eq_Proof_D1Strategy_8} and \eqref{eq_Proof_D1Strategy_9} and obtain 
	
		\begin{align*}
			L_2^1(t) & \geq L_1^0(t) +\left(1-\frac{b_1}{b_2}\right) L_2^0(t)  - \left( x_1 -\frac{b_1}{b_2}x_2 + \left(c_1- \frac{b_1}{b_2}\cdot c_2\right) t\right) \\
			&= L_1^0(t) + L_2^0(t)  - \left( x_1 -\frac{b_1}{b_2}x_2 + \left(c_1- \frac{b_1}{b_2}\cdot c_2\right) t  + \frac{b_1}{b_2} \cdot L_2^0(t)\right) .
		\end{align*}
		Finally, as the minimum in \eqref{eq_Proof_D1Strategy_1} is attained by the second term, this implies that 
	
		\begin{align*}
			L_1^1(t) + L_2^1(t) &= x_1 -\frac{b_1}{b_2}x_2 + \left(c_1- \frac{b_1}{b_2}\cdot c_2\right) t  + \frac{b_1}{b_2} \cdot L_2^0(t) + L_2^1(t) \\
			& \geq L_1^0(t) + L_2^0(t)
		\end{align*}
		and hence the proof for $\mathbf{x}\in\mathcal{D}_1\cap\mathbb{R}^2_{\geq 0}$ is finished. Note that the additional restriction $x_2\geq 0$ can be dropped, since in the case of $x_2<0$, by admissibility, no dividends are paid until $X_2(t)$ reaches zero. Hence, any admissible strategy on the second branch coincides with the strategy $L_2^*$. Once the process $X_2$ reaches zero, we may apply the restricted result.
		\end{proof}
	
	\begin{remark} \label{Remark_WhyNotExtendable}
		Note that the assumption $b_1\leq b_2$ in Theorem \ref{Theorem_BetterStrategyConstruction} does indeed impose a restriction to our model. As we assumed \eqref{eq_Assumption_c1b1c2b2} throughout the whole paper, we can \emph{not} simply exchange branches in order to obtain  the case $b_2<b_1$. \\
		Unfortunately, even though $b_1\leq b_2$ is only used in \eqref{eq_Proof_D1Strategy_8}, it is not possible to adapt the proof to obtain a similar result neither for the case $b_2<b_1$, nor for $\mathbf{x}\in\mathcal{D}_2$ and in the following we heuristically explain why. \\
		The general idea of the proof is, given the arbitrary strategy $\mathbf{L}^0$, to construct another strategy $\mathbf{L}^1\in \Pi_\mathbf{x}^{1}$ that fulfills $\tau^{\mathbf{L}^1}\geq \tau^{\mathbf{L}^0}$ almost surely. (Actually, \eqref{eq_Proof_D1Strategy_1} and \eqref{eq_Proof_D1Strategy_2} ensure that \eqref{eq_Proof_D1Strategy_NeededInRemark} implies even $\tau^{\mathbf{L}^1}=\tau^{\mathbf{L}^0}$.) \\
		This construction relies heavily on the assumptions \eqref{eq_Assumption_c1b1c2b2} and $\mathbf{x}\in\mathcal{D}_1$. Otherwise $L_1^1(t)$ would not be admissible, as $c_1-\frac{b_1}{b_2}c_2 <0$ or $x_1-\frac{b_1}{b_2}x_2 <0$, respectively. Moreover, in order to fulfill the second inequality in \eqref{eq_Proof_D1Strategy_3}, $b_1\leq b_2$ is essential. We illustrate this using a counterexample: Assume $b_1>b_2$ and fix a strategy $\mathbf{L}^0$ such that 
		
		\begin{itemize} 
		\item at $t=0$, the first branch pays the whole inital capital $x_1$ as lump sum, while 
		\item the second branch does not make any dividend payments at $t=0$.
		\end{itemize} 
		Let $x_1,x_2>0$. Then, by \eqref{eq_Proof_D1Strategy_1} and \eqref{eq_Proof_D1Strategy_2} we have
		\begin{align*}
		    L_1^1(0+) + L_2^1(0+) &= x_1 -\frac{b_1}{b_2}x_2 + \frac{b_1}{b_2} \cdot L_2^0(0+) + x_2  \\
		    &= x_1 - \left(1-\frac{b_1}{b_2}\right) x_2 
		    < x_1  = L_1^0(0+) + L_2^0(0+),
	    \end{align*}  where $L_i^j(0+):=\lim_{t\downarrow 0} L_i^j(t)$, $j=0,1$, $i=1,2$.
		Hence, if $b_1>b_2$, then the construction \eqref{eq_Proof_D1Strategy_1} - which ensures that $\tau^{\mathbf{L}^0}\leq \tau^{\mathbf{L}^1}$ - does not allow for higher dividend payments in general.
	\end{remark}
	
	The following Corollary summarizes the important consequences of Theorems \ref{Theorem_OptimalityIndicator_SpecialStrategy} and   \ref{Theorem_BetterStrategyConstruction}. 
	
	\begin{corollary} \label{Corollary_OptimalStrategyUnderAssumptions}
		Let $b_1\leq b_2$. Then for any $\mathbf{x} \in \mathcal{D}_1$, the optimal strategy is of type $\mathbf{L}^{*1}$. \\ 
		If $\mathbf{x}\in\mathcal{D}_2$ then either the optimal strategy is of type $\mathbf{L}^{*2}$, or the optimal strategy's controlled process enters $\mathcal{D}_1$ with positive probability, in which case it is optimal to continue with a strategy of type $\mathbf{L}^{*1}$.
	\end{corollary}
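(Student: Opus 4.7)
The plan is to deduce both statements by combining the two preceding theorems with the Dynamic Programming Principle (Proposition \ref{Proposition_DPP}). For the first assertion, I would fix $\mathbf{x}\in\mathcal{D}_1$ and apply Theorem \ref{Theorem_BetterStrategyConstruction} (which is where the hypothesis $b_1\leq b_2$ enters) to an arbitrary $\mathbf{L}^0\in\Pi_{\mathbf{x}}$, producing some $\mathbf{L}^1\in\Pi_{\mathbf{x}}^1$ with $V^{\mathbf{L}^1}(\mathbf{x})\geq V^{\mathbf{L}^0}(\mathbf{x})$. Taking the supremum over all admissible strategies then collapses $V(\mathbf{x})$ to $\sup_{\mathbf{L}\in\Pi_{\mathbf{x}}^1}V^{\mathbf{L}}(\mathbf{x})$, and Theorem \ref{Theorem_OptimalityIndicator_SpecialStrategy} immediately identifies the maximizer inside $\Pi_{\mathbf{x}}^1$ as a strategy of type $\mathbf{L}^{*1}$.

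For the second assertion, I would fix $\mathbf{x}\in\mathcal{D}_2$ and, for a given $\varepsilon>0$, pick an $\varepsilon$-optimal strategy $\mathbf{L}^*\in\Pi_{\mathbf{x}}$. I then consider the first entry of the controlled surplus into the interior of $\mathcal{D}_1$,
\begin{equation*}
\sigma := \inf\{t\geq 0 \colon \mathbf{X}^{\mathbf{L}^*}(t)\in\mathcal{D}_1^\circ\},
\end{equation*}
and distinguish two scenarios. If $\mathbb{P}_{\mathbf{x}}(\sigma<\tau^{\mathbf{L}^*})=0$, the controlled process stays in $\mathcal{D}_2$ up to ruin, hence $\mathbf{L}^*\in\Pi_{\mathbf{x}}^2$, and Theorem \ref{Theorem_OptimalityIndicator_SpecialStrategy} provides a strategy of type $\mathbf{L}^{*2}$ that is at least as good. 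Otherwise $\mathbb{P}_{\mathbf{x}}(\sigma<\tau^{\mathbf{L}^*})>0$; applying Proposition \ref{Proposition_DPP} at the stopping time $\sigma$, the continuation of $\mathbf{L}^*$ on $\{\sigma<\tau^{\mathbf{L}^*}\}$ starts from $\mathbf{X}^{\mathbf{L}^*}(\sigma)\in\mathcal{D}_1$ and must itself be (nearly) optimal for the control problem \eqref{optimalvaluefct} from that surplus. The already proven first part of the corollary then forces this continuation to be of type $\mathbf{L}^{*1}$, yielding the stated alternative.

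The main obstacle I expect is the rigorous splicing in the second case: one has to concatenate the prefix of $\mathbf{L}^*$ up to $\sigma$ with an optimizer of type $\mathbf{L}^{*1}$ starting from the random surplus $\mathbf{X}^{\mathbf{L}^*}(\sigma)$ in a measurable, admissible way, without losing optimality when integrating over the distribution of $\mathbf{X}^{\mathbf{L}^*}(\sigma)$. This parallels the concatenation/discretization argument already executed in the proof of Proposition \ref{Proposition_DPP} (where the grids $(v_1^{(i)}),(v_2^{(i)})$ were introduced precisely for this purpose); working throughout with $\varepsilon$-optimal strategies and then letting $\varepsilon\downarrow 0$ via the supremum characterization \eqref{optimalvaluefct} bypasses any question regarding the existence of a true optimizer. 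Once these measurability and limit-passing technicalities are handled, both claims follow by routine combination of Theorems \ref{Theorem_OptimalityIndicator_SpecialStrategy} and \ref{Theorem_BetterStrategyConstruction}.
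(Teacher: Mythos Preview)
Your proposal is correct and matches the paper's intended argument; in fact the paper gives no explicit proof at all, merely stating that the corollary ``summarizes the important consequences of Theorems~\ref{Theorem_OptimalityIndicator_SpecialStrategy} and~\ref{Theorem_BetterStrategyConstruction}.'' Your write-up is therefore more careful than the paper itself: the passage to $\varepsilon$-optimal strategies, the use of the Dynamic Programming Principle at the first-entry time $\sigma$, and the awareness of the splicing/measurability issue are details the paper leaves entirely implicit.
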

	
	As explained in Remark \ref{Remark_WhyNotExtendable} our approach for proving the optimality of bang strategies fails in the case $b_2<b_1$ as well as if $b_1\leq b_2, ~\mathbf{x}\in\mathcal{D}_2$. Even though in the latter case Corollary \ref{Corollary_OptimalStrategyUnderAssumptions} does make a statement on the two possible behaviors of the optimal strategy, there remains an open question: If the optimal strategy's controlled process enters $\mathcal{D}_1$ with positive probability, then the corollary does not make any statement on how the optimal strategy behaves until this event. \\
	As both of these problems could possibly be solved using other techniques, we leave them as open questions for future research.
	
	\section*{Value functions of bang strategies} 
	Next, we study properties of the value functions of strategies of type $\mathbf{L}^{*1}$ and $\mathbf{L}^{*2}$. By symmetry, the classes are completely similar and we thus focus on the former. \\
	For any $\mathbf{x}=(x_1,x_2)\in\mathcal{S}$ the value function $V^{\mathbf{L}^{*1}}(x_1,x_2)$ of the strategy $\mathbf{L}^{*	1} = (L_1,L_2^*)$ can be expressed as

	\begin{equation} \label{eq_SpecialStrategyValueFunc}
		\begin{aligned}
			&V^{\mathbf{L}^{*1}}  (x_1,x_2) 
		\\	&= (x_2 \vee 0) + \mathbb{E}_{x_1,x_2}\left[\int_0^{\tau^{L_1}} e^{-qs} \diff L_1(s) + c_2\cdot \int_0^{\tau^{L_1}}\mathds{1}_{\{\overline{X}_2(s)\geq 0\}} \cdot \mathds{1}_{\{\overline{X}_2(s) = X_2(s)\}} \cdot e^{-qs} \diff s \right].
		\end{aligned}
	\end{equation}
	Note that if $x_2\geq 0$, then $\mathds{1}_{\{\overline{X}_2(s)\geq 0\}} \equiv 1$ and hence, the expression can be dropped in the second integral of \eqref{eq_SpecialStrategyValueFunc}. If the initial capital $x_1$ of branch one is negative, then we may characterize the value of strategy $\mathbf{L}^{*1}$ explicitly in terms of $V^{\mathbf{L}^{*1}}(0,0)$: 

	\begin{lemma} \label{Lemma_SpecialStrategyNeverGoSet}
		For any $x_1<0$, $x_2\geq 0$ and any admissible strategy $L_1$ on branch one it holds that
		
		\begin{equation} \label{eq_ExplicitRepresentationVForX1Negative}
			V^{\mathbf{L}^{*1}}(x_1,x_2) = x_2 + \frac{c_2}{q+\lambda} + \left(V^{\mathbf{L}^{*1}}(0,0) -\frac{c_2}{q+\lambda} \right)\cdot  e^{(q+\lambda)\cdot\frac{x_1}{c_1}}.
		\end{equation}
		In particular, as $x_1\downarrow -\infty$, $V^{\mathbf{L}^{*1}}$ is exponentially decreasing and  
		
		\begin{equation}
			\lim_{x_1\to-\infty} V^{\mathbf{L}^{*1}}(x_1,x_2) = x_2 + \frac{c_2}{q+\lambda}.
		\end{equation}
	The result holds analogously for $x_1\geq 0, ~x_2<0$ and strategies of type $\mathbf{L}^{*2}$.  
	\end{lemma}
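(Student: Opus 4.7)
The plan is to compute $V^{\mathbf{L}^{*1}}(x_1,x_2)$ by conditioning on the first claim arrival time $\tau_1 \sim \expo(\lambda)$ and comparing it with the deterministic hitting time $T_0 := -x_1/c_1 > 0$ that $X_1$ would need to reach $0$ in the absence of claims.

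I would first describe the pre-$(\tau_1 \wedge T_0)$ dynamics under $\mathbf{L}^{*1}$. Since $x_1<0$ and $L_1(0)=0$, admissibility forces $L_1(t)=0$ on $[0,T_0]$ (branch one cannot pay while $X_1<0$), so $X_1^{\mathbf{L}}(t)=x_1+c_1 t$. On branch two, because $x_2\ge 0$ and no claim has arrived, $\overline{X}_2(s)=X_2(s)=x_2+c_2 s\ge 0$; hence $L_2^*(0+)=x_2$ (lump sum) and $L_2^*(t)=x_2+c_2 t$ on $(0,\tau_1\wedge T_0)$, keeping $X_2^{\mathbf{L}}(t)=0$ throughout this interval.

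Next I would split into two scenarios. On $\{\tau_1<T_0\}$: at the claim time $\tau_1$ we have $X_1^{\mathbf{L}}(\tau_1)=x_1+c_1\tau_1-b_1U_1<0$ and $X_2^{\mathbf{L}}(\tau_1)=-b_2U_1<0$, so simultaneous ruin occurs at $\tau_1$ and no further dividends are collected. On $\{\tau_1\ge T_0\}$: at time $T_0$ the controlled state is $(0,0)$ and no claim has occurred, so by the strong Markov property of the compound Poisson process — combined with the stationarity of the rule defining $\mathbf{L}^{*1}$ — the remaining expected discounted dividends from time $T_0$ (discounted back to $T_0$) equal $V^{\mathbf{L}^{*1}}(0,0)$.

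Assembling the three contributions then yields the formula: the lump sum adds $x_2$; the continuous flow contributes
\[
c_2\,\mathbb{E}_{\mathbf{x}}\!\left[\int_0^{\tau_1\wedge T_0} e^{-qs}\,\diff s\right]
= c_2\int_0^{T_0} e^{-(q+\lambda)s}\,\diff s
= \frac{c_2}{q+\lambda}\bigl(1-e^{-(q+\lambda)T_0}\bigr),
\]
and the Markovian continuation from $T_0$ contributes $\mathbb{E}_{\mathbf{x}}\bigl[e^{-qT_0}\mathds{1}_{\{\tau_1\ge T_0\}}\bigr]\cdot V^{\mathbf{L}^{*1}}(0,0)=e^{-(q+\lambda)T_0}\,V^{\mathbf{L}^{*1}}(0,0)$. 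Substituting $T_0=-x_1/c_1$ and rearranging gives \eqref{eq_ExplicitRepresentationVForX1Negative}; the limit as $x_1\downarrow-\infty$ is immediate since $e^{(q+\lambda)x_1/c_1}\to 0$.

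The main obstacle is the strong Markov restart at $T_0$: one must justify that the continuation of $\mathbf{L}^{*1}$ on the time-shifted process coincides in law with $\mathbf{L}^{*1}$ applied from the initial state $(0,0)$. For $L_2^*$ this is clean because at time $T_0$ the indicator $\mathds{1}_{\{\overline{X}_2(s)=X_2(s)\}}$ equals $1$ and the prescription depends only on the running supremum of the shifted surplus; for $L_1$ the identification reflects the Markovian-strategy convention implicit in the definition of $\Pi^{*1}_{\mathbf{x}}$.
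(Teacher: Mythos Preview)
Your proposal is correct and follows essentially the same approach as the paper: both condition on whether the first claim arrives before or after the deterministic time $T_0=-x_1/c_1$, use that ruin is immediate in the former case and that the process restarts from $(0,0)$ in the latter, and then assemble the lump sum $x_2$, the continuous $c_2$-flow, and the discounted continuation term. Your Fubini computation $c_2\,\mathbb{E}[\int_0^{\tau_1\wedge T_0}e^{-qs}\,\diff s]=c_2\int_0^{T_0}e^{-(q+\lambda)s}\,\diff s$ is a slightly cleaner packaging of what the paper writes as two separate integrals, and your explicit discussion of the Markov restart at $T_0$ is more careful than the paper's one-line appeal to memorylessness.
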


	\begin{proof}
		Note that by construction, branch two pays $x_2$ as a lump sum at the beginning and continues paying constantly $c_2\diff t$ as dividends, whenever the controlled process is positive. This construction ensures that $X_2^{L^*_2}(t)\leq 0$. Consequently, if the first claim at time $\tau_1$ happens before branch one gets positive, i.e., if $\tau_1\leq t_0 :=  -\tfrac{x_1}{c_1}$, then the value of the strategy is exactly the discounted value of the dividends paid from branch two until the first claim. If the first claim happens after $t_0$, then the value of the strategy at $t_0$ is exactly $e^{-qt_0}\cdot V^{\mathbf{L}^{*1}}(0,0)$ plus the discounted dividends from branch two up to time $t_0$. As $\tau_1\sim \operatorname{Exp}(\lambda)$ and due to the lack of memory of the exponential distribution, we conclude that for any $x_1<0$, $x_2\geq 0$ it holds that
	
		\begin{align*}
			V^{\mathbf{L}^{*1}}(x_1,x_2) &= x_2 + \mathbb{E}\left[\mathds{1}_{\{\tau_1\leq t_0\}} V^{\mathbf{L}^{*1}}(x_1,0) + \mathds{1}_{\{\tau_1> t_0\}} V^{\mathbf{L}^{*1}}(x_1,0)   \right]\\
			&= x_2 + \int_0^{t_0} \lambda e^{-\lambda t}\int_0^{t} e^{-qs} c_2 \diff s\diff t \\
			& \qquad + \mathbb{P}(\tau_1> t_0)\cdot \left( e^{-q \cdot t_0}\cdot V^{\mathbf{L}^{*1}}(0,0) + \int_0^{t_0} e^{-qs}c_2\diff s \right) \\
			&= x_2 + \frac{c_2}{q}\cdot\left( 1- \frac{\lambda}{q+\lambda}\right)  + \left(V^{\mathbf{L}^{*1}}(0,0) -\frac{c_2}{q}\cdot\left(1- \frac{\lambda}{\lambda+q} \right) \right)\cdot  e^{(q+\lambda)\cdot\frac{x_1}{c_1}}. \qedhere
		\end{align*}
	\end{proof}
	
	\begin{remark} \label{Remark_NeverGoSet}
		The asymptotics as $x_1\downarrow -\infty$ indicate that $\mathbf{L}^{*1}$ is \emph{not} the optimal strategy on $\mathbb{R}_{<0}\times \mathbb{R}_{\geq 0}$ because as long as the surplus of branch one is negative, $\mathbf{L}^{*1}$ collapses to the trivial strategy, that always pays the most dividend possible. However, if the initial capital $\mathbf{x}$ is in $\mathbb{R}^2_{\geq 0}$, then the process $\mathbf{X}^{\mathbf{L}^{*1}}$ never actually enters this set, as by construction of $\mathbf{L}^{*1}$ the occurrence of $X_1^{\mathbf{L}^{*1}}(t)<0$ implies ruin. 
	\end{remark}
	
	The next Lemma gives sufficient conditions on when $\mathbf{L}^{*1}$ is preferable over $\mathbf{L}^{*2}$ and vice versa. 

	\begin{lemma} \label{Lemma_V*1vsV*2}
		Let $x_1\geq 0, ~x_2\in\mathbb{R}$. If $V^{\mathbf{L}^{*1}}(x_1,x_2)\geq V^{\mathbf{L}^{*2}}(x_1,x_2)$, then we have 
	
		\begin{equation*}
			V^{\mathbf{L}^{*1}}(x_1+h,x_2) \geq V^{\mathbf{L}^{*2}}(x_1+h,x_2) \quad \text{ for all } h>0.
		\end{equation*}
		If otherwise $x_2\geq 0$, $x_1\in\mathbb{R}$ and $V^{\mathbf{L}^{*2}}(x_1,x_2)\geq V^{\mathbf{L}^{*1}}(x_1,x_2)$, then 
	
		\begin{equation*}
			V^{\mathbf{L}^{*2}}(x_1,x_2+h) \geq V^{\mathbf{L}^{*1}}(x_1,x_2+h) \quad \text{ for all } h>0.
		\end{equation*}
	\end{lemma}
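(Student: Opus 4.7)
The plan is to reduce the inequality to two auxiliary increment estimates and combine them via the hypothesis, exploiting the asymmetry that in each of the classes $\Pi^{*1}_{\mathbf{x}}$ and $\Pi^{*2}_{\mathbf{x}}$ one branch runs the fully automatic bang scheme while the other follows a freely chosen admissible strategy that determines the ruin time. Throughout I read $V^{\mathbf{L}^{*1}}$ and $V^{\mathbf{L}^{*2}}$ as the suprema of the value functional over the respective classes.

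First I would prove the exact identity, valid for $x_1\ge 0$ and $h>0$,
\begin{equation*}
V^{\mathbf{L}^{*2}}(x_1+h,x_2)=V^{\mathbf{L}^{*2}}(x_1,x_2)+h.
\end{equation*}
Under any strategy $(L_1^*,L_2)\in\Pi^{*2}_{\mathbf{x}}$, ruin occurs at $\tau^{L_2}$, which is independent of $x_1$. Since $x_1\ge 0$, the bang payment on branch one reduces to a lump sum $x_1$ at $0+$ plus premium payments $c_1\,\diff s$ on the running-supremum set $\{\overline{X}_1(s)=X_1(s)\}$. This set is translation-invariant in $x_1$, so replacing $x_1$ by $x_1+h$ merely enlarges the initial lump sum by $h$, increasing the value of each pair $(L_1^*,L_2)$ by exactly $h$. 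Taking the supremum over admissible $L_2$ yields the identity.

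Next I would prove the one-sided inequality, valid for $x_1\ge 0$ and $h>0$,
\begin{equation*}
V^{\mathbf{L}^{*1}}(x_1+h,x_2)\ge V^{\mathbf{L}^{*1}}(x_1,x_2)+h
\end{equation*}
by an $\varepsilon$-optimal construction. Given $\varepsilon>0$, fix an admissible branch-one strategy $L_1$ with $V^{(L_1,L_2^*)}(x_1,x_2)\ge V^{\mathbf{L}^{*1}}(x_1,x_2)-\varepsilon$, and define $\tilde L_1$ starting at $(x_1+h,x_2)$ by paying a lump sum $h$ at $0+$ and afterwards following $L_1$. Admissibility is inherited from $L_1$ because the additional $h$ is covered by the additional initial surplus. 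For $t>0$ the controlled process under $(\tilde L_1,L_2^*)$ starting at $(x_1+h,x_2)$ coincides pathwise with that under $(L_1,L_2^*)$ starting at $(x_1,x_2)$, so in particular $\tau^{\tilde L_1}=\tau^{L_1}$. Because $L_2^*$ and $X_2$ are independent of $x_1$, the branch-two dividend stream is pathwise identical, while the extra discounted lump sum contributes precisely $h$. Sending $\varepsilon\downarrow 0$ gives the inequality.

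Combining the two displays with the hypothesis $V^{\mathbf{L}^{*1}}(x_1,x_2)\ge V^{\mathbf{L}^{*2}}(x_1,x_2)$ yields
\begin{equation*}
V^{\mathbf{L}^{*1}}(x_1+h,x_2)\ge V^{\mathbf{L}^{*1}}(x_1,x_2)+h\ge V^{\mathbf{L}^{*2}}(x_1,x_2)+h=V^{\mathbf{L}^{*2}}(x_1+h,x_2),
\end{equation*}
which is the first statement; the second statement follows at once by swapping the roles of the branches. The main technical point to verify carefully is the translation-invariance of the running-supremum set $\{\overline{X}_i(s)=X_i(s)\}$ under shifts of the initial capital and the pathwise independence of branch two's trajectory from $x_1$ under type-$\mathbf{L}^{*1}$ strategies; both follow at once from the definitions, but they are exactly what makes the decoupling in Steps 1 and 2 possible.
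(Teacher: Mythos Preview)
Your proof is correct and follows essentially the same approach as the paper: establish the identity $V^{\mathbf{L}^{*2}}(x_1+h,x_2)=V^{\mathbf{L}^{*2}}(x_1,x_2)+h$ from the construction of the bang component on branch one, establish the inequality $V^{\mathbf{L}^{*1}}(x_1+h,x_2)\ge V^{\mathbf{L}^{*1}}(x_1,x_2)+h$ by paying an initial lump sum of size $h$, and chain these with the hypothesis. The paper's proof is a terse two-line version of exactly this argument.
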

	
	\begin{proof}
		Assume that $V^{\mathbf{L}^{*1}}(x_1,x_2)\geq V^{\mathbf{L}^{*2}}(x_1,x_2)$ for some $x_1\geq 0,~ x_2\in\mathbb{R}$. Then obviously $V^{\mathbf{L}^{*1}}(x_1+h,x_2)\geq V^{\mathbf{L}^{*1}}(x_1,x_2)+h$ as branch one can always pay a lump sum of size $h$. We conclude by construction of $\mathbf{L}^{*2}$ that 
	
		\begin{align*}
			V^{\mathbf{L}^{*1}}(x_1+h,x_2) &\geq h+V^{\mathbf{L}^{*1}}(x_1,x_2) =  V^{\mathbf{L}^{*2}}(x_1+h,x_2).
		\end{align*}
		The proof of the second case is completely analog.
	\end{proof}

\section*{The one-dimensional optimization problem}
As the last step in this section, we formulate the one-dimensional optimization problem that arises for bang strategies. Let $\Pi_{x_i}$, $i=1,2$, be the set of all admissible strategies (in the classical univariate sense, see \cite[Chapter 1.2]{Azcue2014}) acting on branch $i$ with initial capital $x_i$. For technical reasons, we allow $x_i$ to be negative and extend the definition, such that strategies are admissible if they do not pay any dividends, while branch $i$ is negative. Define

\begin{align} \label{eq_SpecialStrategyOptimizationProblem1}
	&V^{*1} (x_1,x_2) := \\ &(x_2 \vee 0) + \sup_{L_1\in\Pi_{x_1}}  \mathbb{E}_{x_1,x_2}\left[\int_0^{\tau^{L_1}} e^{-qs} \diff L_1(s) + c_2\cdot \int_0^{\tau^{L_1}}\mathds{1}_{\{\overline{X}_2(s)\geq 0\}} \cdot \mathds{1}_{\{\overline{X}_2(s) = X_2(s)\}} \cdot e^{-qs} \diff s \right] \nonumber
\end{align}
and 

\begin{align} \label{eq_SpecialStrategyOptimizationProblem2}
			&V^{*2} (x_1,x_2) := \\ &(x_1 \vee 0) + \sup_{L_2\in\Pi_{x_2}}  \mathbb{E}_{x_1,x_2}\left[ c_1\cdot \int_0^{\tau^{L_2}}\mathds{1}_{\{\overline{X}_1(s)\geq 0\}} \cdot \mathds{1}_{\{\overline{X}_1(s) = X_1(s)\}} \cdot e^{-qs} \diff s+\int_0^{\tau^{L_2}} e^{-qs} \diff L_2(s)  \right]. \nonumber
\end{align}
	Corollary \ref{Corollary_OptimalStrategyUnderAssumptions} implies, that if $b_1\leq b_2$ and $\mathbf{x}\in\mathcal{D}_1$ then the solution of \eqref{eq_SpecialStrategyOptimizationProblem1} defines a solution to the original problem \eqref{optimalvaluefct}. \\
	If on the other hand $b_1\leq b_2$ and $\mathbf{x}\in\mathcal{D}_2$, then either the optimal strategy can be defined through the solution of \eqref{eq_SpecialStrategyOptimizationProblem2}, or the optimal strategy ensures that the controlled process enters $\mathcal{D}_1$ with positive probability, which brings us back to the first case. Hence, the maximum of the solutions to \eqref{eq_SpecialStrategyOptimizationProblem1} and \eqref{eq_SpecialStrategyOptimizationProblem2} is at minimum a good approximation to the solution of \eqref{optimalvaluefct} if $\mathbf{x}\in\mathcal{D}_2$. 
	\\
	However, \eqref{eq_SpecialStrategyOptimizationProblem1} and \eqref{eq_SpecialStrategyOptimizationProblem2} are hard to solve explicitly, as $\tau^{L_1}$ (and $\tau^{L_2}$) and the expressions inside the indicator functions are strongly correlated, since all depend on the path of the underlying claim process $S(t)$. An approach to find approximate solutions to the problems in another subclass of the admissible strategies and using Monte-Carlo simulations is presented in the following section.

	\section{Approximation approach and simulation study} \label{Section_ApproximationSimulation}
	
	Given the theoretical results of the preceding section, we want to approximately solve the problems \eqref{eq_SpecialStrategyOptimizationProblem1} and \eqref{eq_SpecialStrategyOptimizationProblem2}. As the approach is identical, we are going to discuss only the former case. \\ 
	Assume that $x_2\leq 0$, while $x_1\geq 0$. We define the random process 

	\begin{equation*}
		\Lambda(x_2,s) := c_2 \cdot \mathds{1}_{\{\overline{X_2}(s)\geq 0\}} \cdot \mathds{1}_{\{\overline{X}_2(s)= X_2(s)\}}, \quad s\geq 0, 
	\end{equation*} such that we have (see \eqref{eq_SpecialStrategyValueFunc})

	\begin{align*}
		V^{\mathbf{L}^{*1}}(x_1,x_2)&=  \mathbb{E}_{x_1,x_2}\left[\int_0^{\tau^{L_1}} e^{-qs} \diff L_1(s) + \int_0^{\tau^{L_1}} \Lambda(x_2,s) \cdot e^{-qs} \diff s \right].
	\end{align*}
	The function $\Lambda$ depends explicitly on the initial value $x_2$ and the time $s$ and implicitly on the random path of the claim process $S$. Let $\omega\in\Omega$. Then $\Lambda(x_2,s)(\omega)$ is monotonically increasing with respect to $x_2$ and for any $s\geq 0$ it holds that 

	\begin{equation*}
		c_2 \geq \Lambda(x_2,s)	 \overset{x_2\to-\infty}{\longrightarrow} 0
	\end{equation*} almost surely.
	Equation \eqref{eq_SpecialStrategyOptimizationProblem1} may be reformulated as

	\begin{equation}
		\label{eq_SubproblemOptimization}
		\tilde{V}^{*1}(x_1,x_2) = \sup_{L_1\in\Pi_{x_1}} 	V^{\mathbf{L}^{*1}}(x_1,x_2). 
	\end{equation}
	As mentioned before, due to the complicated dependency structure between $\Lambda$ and $X_1(t)$, this problem seems impossible to solve exactly and explicitly. However, similar problems have been treated before, e.g., in \cite{Thonhauser2007}, where the authors consider the problem \eqref{eq_SubproblemOptimization} with $\Lambda(x_2,s)(\omega)\equiv\Lambda$ constant, i.e., (see \cite[Eq. (2)]{Thonhauser2007}) a problem of type

	\begin{equation} \label{eq_ThonhauserProblem}
		\tilde{V}_1(x_1) := \sup_{L_1\in\Pi_{x_1}} 	  \mathbb{E}_{x_1}\left[\int_0^{\tau^{L_1}} e^{-qs} \diff L_1(s) + \int_0^{\tau^{L_1}} \Lambda \cdot e^{-qs} \diff s \right].
	\end{equation}
	It is shown, that in the case of unbounded dividend payments, the corresponding HJB equation is given by (cf. \cite[Eq. 36]{Thonhauser2007})

	\begin{equation*}
		\max\left\{\Lambda + c\tilde{V}_1'(x_1) + \lambda \int_0^x \tilde{V}_1(x-y)\diff F_1(y) - (q+\lambda)\tilde{V}_1(x), 1-\tilde{V}_1'(x) \right\}= 0,
	\end{equation*} where $F_1$ is the cdf of the claims affecting branch one, i.e., $F_1(y) = F(y/b_1)$. Moreover, it turns out that in the case of exponential claims, the optimal strategy is a barrier strategy, cf. \cite[Prop. 11]{Thonhauser2007}. \\ 
	In the remainder of this section we will therefore restrict to barrier strategies as well. We say that an admissible strategy $L_1$ is of \emph{barrier type}, if there exists a fixed surplus level $x_1^b$, such that 
	
	\begin{itemize}
    \item $L_1$ does not pay any dividends if $X_1^{L_1}(t)< x_1^b$, 
    \item $L_1$ pays continuously $c_1\diff t$ as dividends, while $X_1^{L_1}(t) = x_1^b$,
    \item $L_1$ pays a lump sum of size $X_1^{L_1}(t) - x_1^b$ as dividends, if $X_1^{L_1}(t) > x_1^b$.
	\end{itemize}
	The set of all barrier strategies acting on branch one with initial capital $x_1$ is denoted by $\Pi_{x_1}^b$. We consider

	\begin{equation}
		\label{eq_SubproblemOptimization_Modified}
		\tilde{V}_1^b(x_1,x_2) := \sup_{L_1\in\Pi_{x_1}^b} 	\mathbb{E}_{x_1,x_2}\left[\int_0^{\tau^{L_1}} e^{-qs} \diff L_1(s) + \int_0^{\tau^{L_1}} \Lambda(x_2,s) \cdot e^{-qs} \diff s \right],
	\end{equation}
	which is a slight modification of \eqref{eq_SpecialStrategyOptimizationProblem1} (see also \eqref{eq_SubproblemOptimization}) and solve it using Monte-Carlo techniques. \\ 
	In order to apply the results from \cite{Thonhauser2007}, we assume in the following that the claims $U_i$ of the driving compound Poisson process $S(t)$ are exponentially distributed. Note that for modeling claim sizes there are more realistic distributions, such as log-normal or the generalized Pareto distribution. However, the exponential distribution is a popular choice for claim sizes in Cramér-Lundberg risk models, as this case is particularly well-treatable since e.g., ruin probabilities and excess of loss distributions at the time of ruin can be expressed in closed-form, see \cite{Asmussen2010}.
	In particular, let $U_i\sim\operatorname{Exp}(\gamma)$, $\gamma>0$, such that
	 
	 \begin{equation*}
		b_j U_i \sim \operatorname{Exp}\left(\gamma/b_j\right)	, \qquad j=1,2.
	\end{equation*}
	By \cite[Prop. 11]{Thonhauser2007} the barrier $x_1^*$ corresponding to the solution of \eqref{eq_ThonhauserProblem} fulfills 
	
	\begin{equation} \label{eq_ThonhauserBarrier}
		x_1^* := \frac{1}{R_1-R_2} \cdot \log\left(\frac{R_2^2 \cdot (\gamma/b_1 + R_2) \cdot (\gamma/b_1\cdot q + e^{R_1x_1^*}(\gamma/b_1 + R_1)\cdot \Lambda \cdot R_1)}{R_1^2 \cdot (\gamma/b_1 + R_1) \cdot (\gamma/b_1\cdot q + e^{R_2x_1^*}(\gamma/b_1 + R_2)\cdot \Lambda \cdot R_2)} \right),
	\end{equation} where $R_1,R_2$ are the roots of the polynomial

	\begin{equation} \label{eq_ThonhauserRoots}
		P(R) = c_1 R^2 + \left(\frac{\gamma}{b_1}\cdot c_1 - (q+\lambda)\right) \cdot R - \frac{\gamma}{b_1}\cdot q
	\end{equation} such that $R_2<0<R_1$. Hence, as $0\leq \Lambda(x_2,s)\leq c_2$, the barrier $x_1^b$ corresponding to the solution of \eqref{eq_SubproblemOptimization_Modified} is likely to be contained in the set of solutions of \eqref{eq_ThonhauserBarrier} for $\Lambda\in [0,c_2]$. \\
	For our numerical study, we consider model \eqref{eq_DefinitionModel} with parameters $\gamma= 0.25$, $\lambda =1$, $c_1 = 2,$ $c_2 = 4$, $b_1 = 0.25$, $b_2 = 0.75$ and $q = 0.05$ such that in particular assumption \eqref{eq_Assumption_c1b1c2b2} is fulfilled. Solving \eqref{eq_ThonhauserBarrier} for these parameters and $\Lambda\in[0,c_2]$ yields $x_1^b \in [7.00464,10.7136]$. Similar to the previous reasoning, we can consider $\mathbf{L}^{*2}$ and approximate the corresponding optimal strategy in branch two by a barrier strategy. In this case we obtain for the optimal barrier $x_2^b \in [10.8148,23.7285]$. \\
	Figure \ref{fig_barrierPlot} shows the estimated discounted dividend values of strategy $\mathbf{L}^{*1}=(L_1^b,L_2^*)$ and $\mathbf{L}^{*2}=(L_1^*,L_2^b)$ with respect to different barriers $x_1^b, ~x_2^b$ and for an initial capital of $\mathbf{x}= (25,25)$, such that we always start by lump-sum payments in both branches. As expected, in both cases the optimal barrier for our model is strictly in between the optimal barriers for problem \eqref{eq_ThonhauserProblem} with $\Lambda = 0$ and $\Lambda = c_2$ ($c_1$ for the case of $\mathbf{L}^{*2}$).
	
	\begin{figure}[H]
	\begin{adjustwidth}{-\extralength}{0cm}
        \centering
        \includegraphics[width=8.5cm]{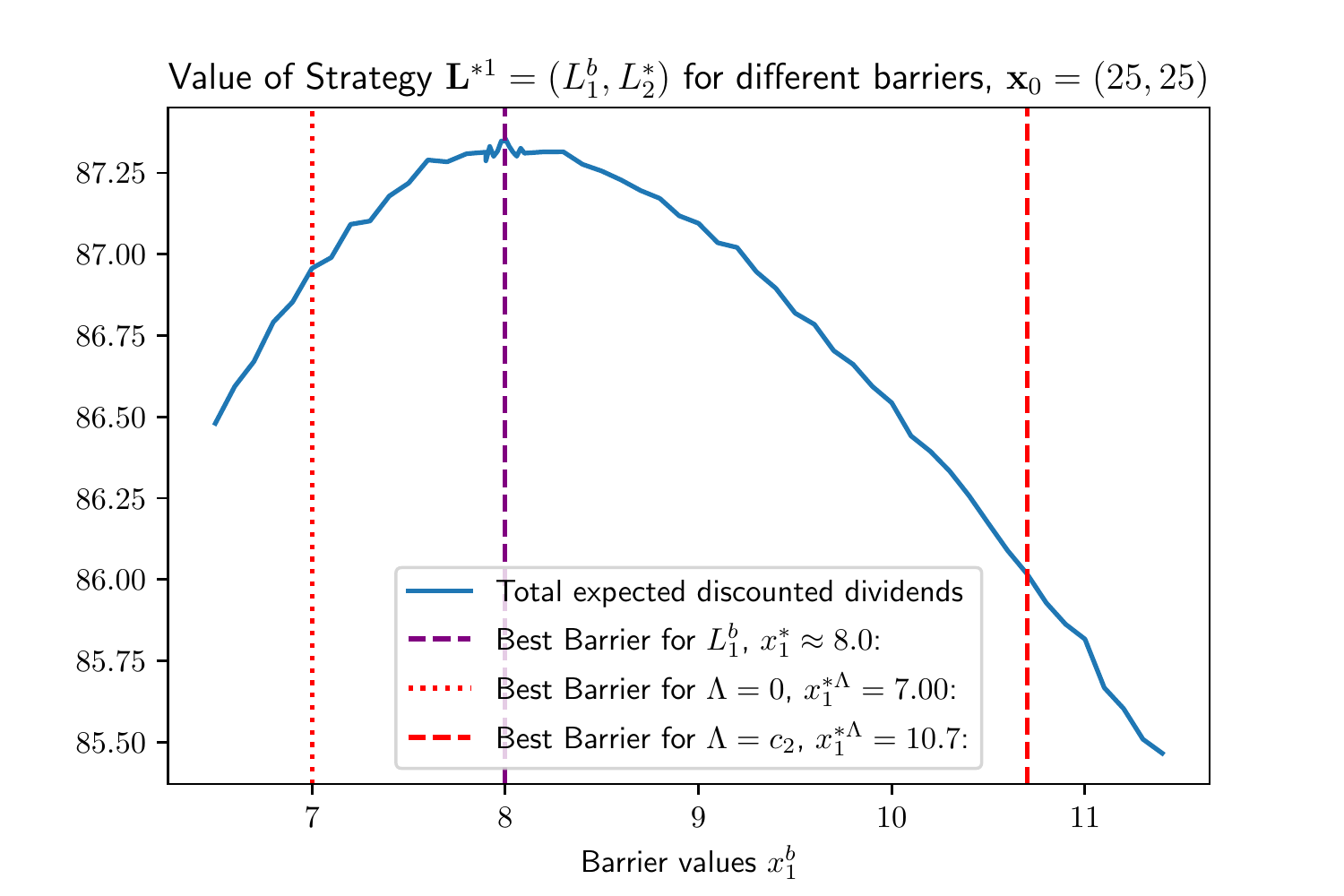}
        \includegraphics[width=8.5cm]{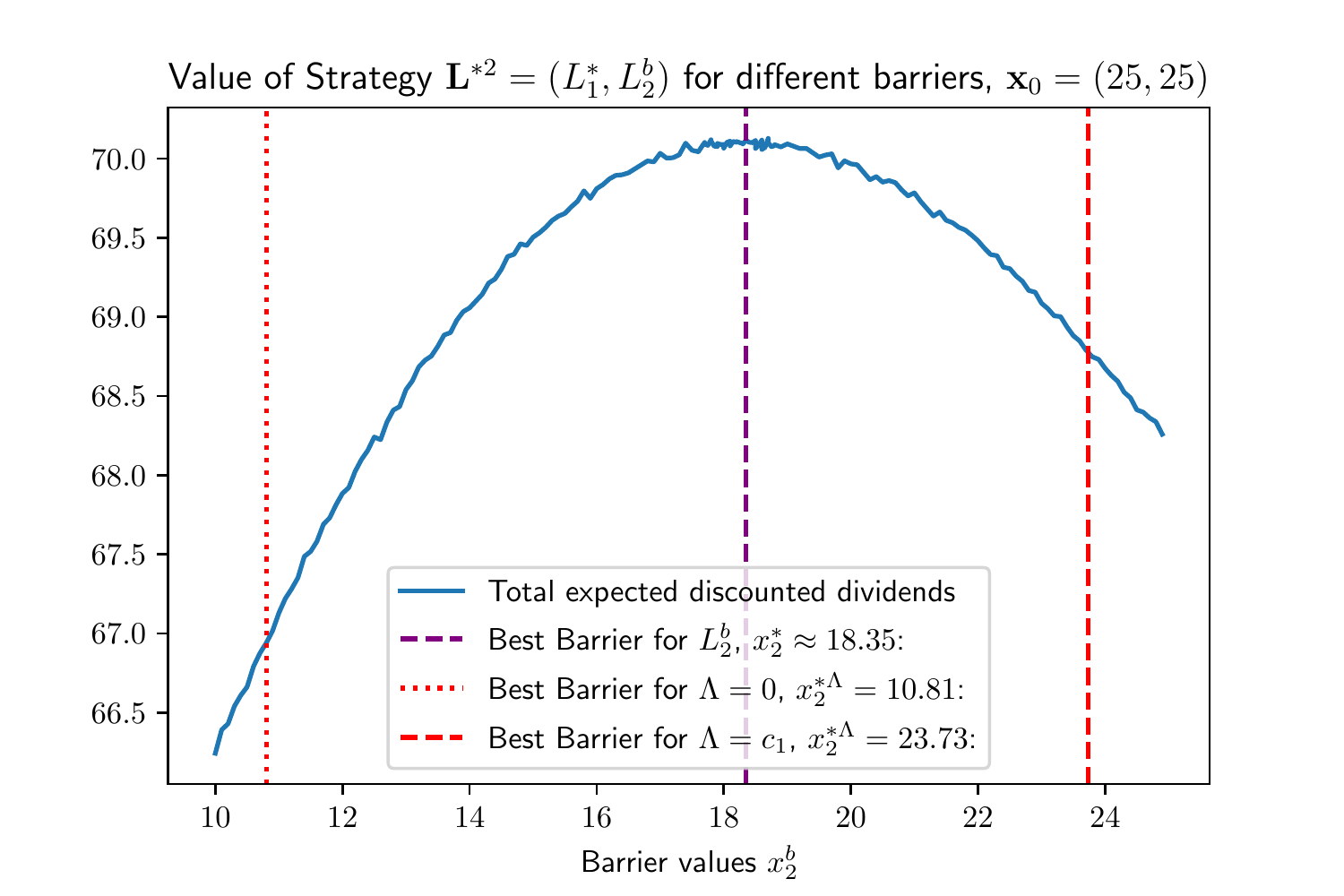}
        \end{adjustwidth}

		\caption{Expected discounted dividend payments of strategy $\mathbf{L}^{*1}=(L_1^b,L_2^*)$ and $\mathbf{L}^{*2}=(L_1^*,L_2^b)$ in dependence of the barriers $x_1^b, ~x_2^b$ chosen for strategies $L_1^b, ~L_2^b$.}
		\label{fig_barrierPlot}
	\end{figure}
	
	\begin{figure}[H]
	\begin{adjustwidth}{-\extralength}{0cm}
	\centering
		\includegraphics[width=16cm]{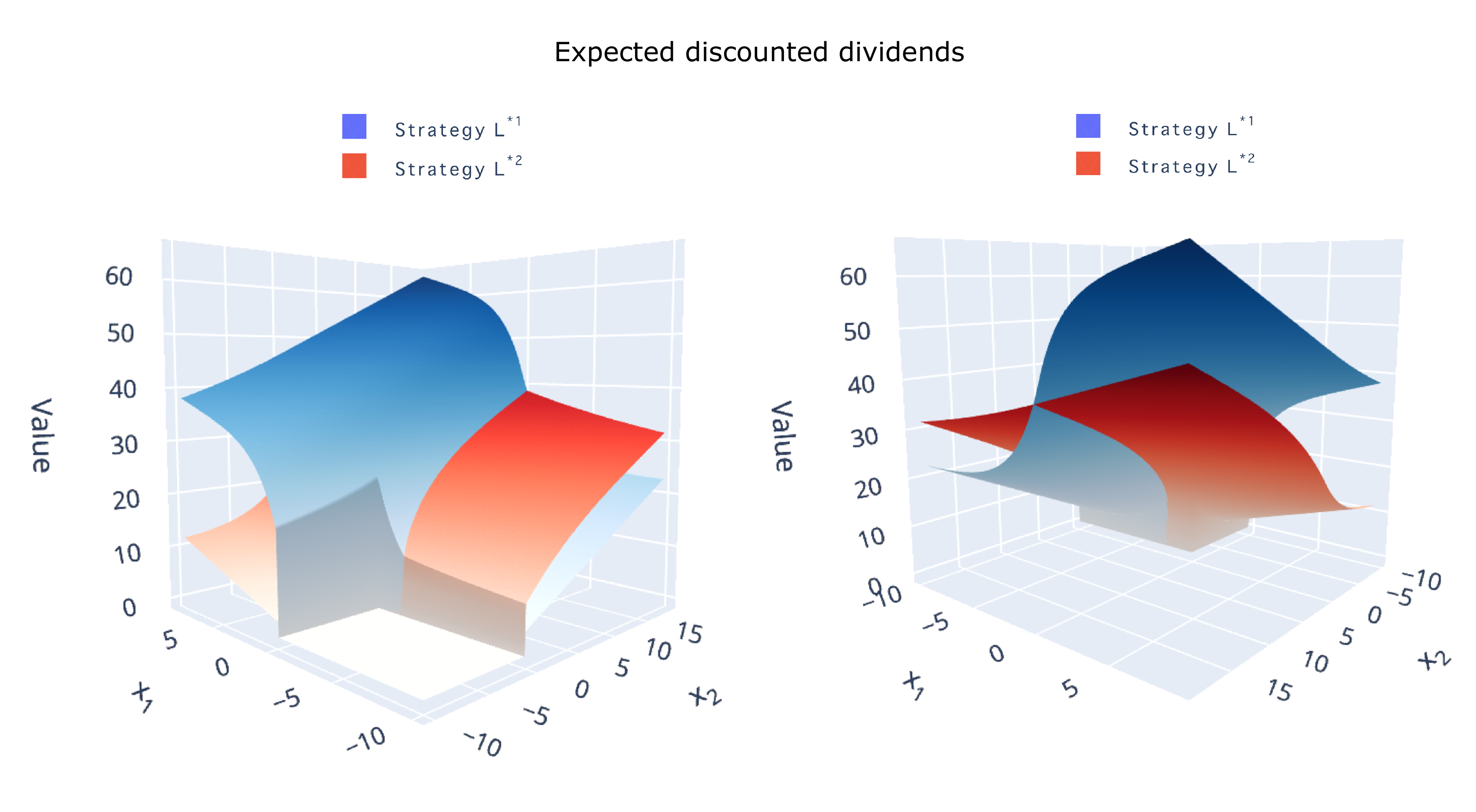}
	\end{adjustwidth}
		\caption{Approximated value functions $V^{\mathbf{L}^{*1}},~V^{\mathbf{L}^{*2}}$ with the optimal barriers $x_1^b=8.0, ~x_2^b = 18.35 $ with respect to different initial values $\mathbf{x}_0$ from several perspectives.}
		\label{fig_gridPlot}
	\end{figure}
	
	Figure \ref{fig_gridPlot} clearly shows that for our example $\mathbf{L}^{*1}$ is the preferable choice over $\mathbf{L}^{*2}$ for all initial values $\mathbf{x}\in\mathbb{R}^2_{\geq 0}$. Only on a subset of $(-\infty,0)\times(0,\infty)$ it yields a smaller expected value for the dividends, as already predicted in Lemma \ref{Lemma_SpecialStrategyNeverGoSet}. However, in general, a global statement that $\mathbf{L}^{*1}$ is better than $\mathbf{L}^{*2}$ (or vice versa) is not true, as our last example indicates:
	\begin{example}
		Consider a model, where $b_1 =b_2, ~c_1=c_2$. Then obviously claims happen along the line $x_1=x_2$ and, if $X_1(0)=X_2(0)$, we have $X_1(t) = X_2(t)$ for all $t\geq 0$. By construction, we have $V^{*1}(x,x) = V^{*2}(x,x)$ for all $x\geq 0$ and consequently Lemma \ref{Lemma_V*1vsV*2} implies, that $V^{*1}(\mathbf{x})\geq V^{*2}(\mathbf{x})$ on $\mathcal{D}_1$, while $V^{*2}(\mathbf{x})\geq V^{*1}(\mathbf{x})$ on $\mathcal{D}_2$. This shows that a general statement on whether $\mathbf{L}^{*1}$ or $\mathbf{L}^{*2}$ is better for all $\mathbf{x}\in\mathbb{R}^2_{\geq 0}$ can not hold in general. 
	\end{example}

\vspace{6pt}

\acknowledgments{We gratefully acknowledge the support of our Ph.D. supervisor Anita Behme and her valuable comments on earlier versions of the present manuscript. \\
Moreover, we thank three referees for suggestions and comments that helped improving this manuscript. \\ 
	We acknowledge the GWK support for funding this project by providing computing time through the Center for Information Services and HPC (ZIH) at TU Dresden. \\
}

\conflictsofinterest{The authors declare no conflict of interest.} 



%


\begin{adjustwidth}{-\extralength}{0cm}

\reftitle{References}


\bibliography{bibliography.bib}

%


\end{adjustwidth}
\end{document}